\title{On the isometry group of the Urysohn space}
\author{Katrin Tent and Martin Ziegler}
\date{February 3, 2012}
\newtheorem{theorem}{Theorem}[section]
\newtheorem{example}[theorem]{Examples}
\newtheorem{lemma}[theorem]{Lemma}
\newtheorem{proposition}[theorem]{Proposition}
\newtheorem{corollary}[theorem]{Corollary}
\newtheorem{definition}[theorem]{Definition}
\newtheorem{remark}[theorem]{Remark}
\newcommand{\nc}{\newcommand}
\nc{\inv}{^{-1}}
\nc{\Q}{\mathbb{Q}}
\nc{\R}{\mathbb{R}}
\nc{\C}{\mathcal{C}}
\nc{\G}{\mathcal{G}}
\nc{\M}{\mathcal{M}}
\nc{\U}{\mathbb{U}}
\nc{\Frl}{Fra\"iss\'e limit\xspace}
\nc{\Frls}{Fra\"iss\'e limits\xspace}
\nc{\fg}{finitely generated\xspace}
\renewcommand{\phi}{\varphi}
\DeclareMathOperator{\Fix}{Fix}
\DeclareMathOperator{\aut}{Aut}
\DeclareMathOperator{\diam}{diam}
\DeclareMathOperator{\tp}{tp}
\DeclareMathOperator{\dom}{dom}
\DeclareMathOperator{\im}{im}
\newcommand{\Ind}{
 \setbox0=\hbox{$x$}\kern\wd0\hbox to 0pt{\hss$
 \mid$\hss}\lower.9\ht0\hbox to 0pt{\hss$\smile$\hss}\kern\wd0
}
\newcommand{\indep}[3]{#1\mathop{\mathpalette\Ind{}}_{#2}#3
}
\newcommand{\Indep}{\indep{}{}{}}
\begin{document}
\maketitle
\begin{abstract}We give a  general criterion for the (bounded)
  simplicity of the automorphism groups of certain countable structures
  and apply it to show that the isometry group of the Urysohn space
  modulo the normal subgroup of bounded isometries is a simple group.
\end{abstract}

\section{Introduction}

Many very homogeneous mathematical structures are known to have
simple, or at least \emph{essentially simple} automorphism
groups. This is true for the complex numbers \cite{Lascar1992}, for
irreducible Riemannian symmetric spaces of noncompact type, and also
as shown in~\cite{MT} for structures arising as a \Frl of a free
amalgamation class.  The subject of the present paper is another very
homogeneous structure, Urysohn's metric space $\U$, which is the unique
complete homogeneous separable metric space which embeds every finite
metric space. It is easy to construct $\U$: it arises as the
completion of the rational metric space obtained as the \Frl $\Q\U$ of
the class of finite metric spaces with rational distances. $\Q\U$ is
called the \emph{rational} Urysohn space and the usual Urysohn space is
sometimes called the \emph{complete} Urysohn space.

Let $G$ denote the isometry group of the (complete) Urysohn space $\U$
and $B$ the normal subgroup of all isometries having bounded
displacement.  We will show that the quotient $G/B$ is a simple group.

This will follow from a more general result on automorphism groups of
countable structures with a certain independence relation. As another
application of this general result we will give another proof that for classes
with free amalgamation in relational languages the automorphism
group is a simple group unless the \Frl is an indiscernible set.

Our formal framework will be introduced in Section~\ref{S:terminology}
with the main theorem proved in Section~\ref{Sec:proof}. A detailed
analysis of unbounded isometries in the Urysohn space in Section~\ref{Sec:ury} then allows us to apply our main result to the
Urysohn space. As another application we recover and sharpen the results of \cite{MT} in Section~\ref{Sec:free}.

\section{Terminology and notation}\label{S:terminology}

Let $\M$ be a structure and $\G$ its automorphism group.  Using model
theoretic language, for a tuple $\bar a$ and a finite set $B$ we say
that the tuple $\bar a'$ \emph{realises the type} $p=\tp(\bar a/B)$ if there is
an automorphism of $\M$ which maps $\bar a$ to $\bar a'$ and fixes $B$
pointwise.\footnote{If $\M$ is countable and $\omega$-saturated, the
  types so defined correspond exactly to types in the model theoretic
  sense. If $\M$ is only $\omega$--homogeneous, they correspond to
  \emph{realised} types. And if $\M$ is a \Frl (see below), they
  correspond to realised quantifier free types.}

Let $\indep{A}{B}{C}$ be a ternary relation between finite subsets of
$\M$, pronounced $A$ and $C$ are independent over $B$.

\begin{definition}\label{D:ind}
  $\Indep$ is a stationary independence relation if the following
  axioms are satisfied.
  \begin{enumerate}
  \item {\rm (Invariance)} $A$ and $B$ being independent over $C$
    depends only on the type of $ABC$. (We choose enumerations for
    $A$, $B$ and $C$ and consider them as tuples. Note that we write
    $AB$ for the union $A\cup B$.)
  \item {\rm(Monotonicity)} \[\indep{A}{B}{CD} \;\text{ implies
  }\;\indep{A}{B}{C}\text{ and }\indep{A}{BC}{D}.\]
  \item
    {\rm(Transitivity)}
    \[\indep{A}{B}{C}\text{ and }\indep{A}{BC}{D}\;
    \text{ implies }\;\indep{A}{B}{D}.\]
  \item        {\rm(Symmetry)}
    \[\indep{A}{B}{C}\;\text{ implies }\;\indep{C}{B}{A},\]

  \item {\rm (Existence)}\label{R:existence} Let $p$ be a type over $B$
    and $C$ a finite set. Then $p$ has a realisation which is
    independent from $C$ over $B$.
  \item {\rm(Stationarity)} If $\bar a$ and $\bar a'$ have the
    same type over $B$ and are both independent from $C$ over $B$,
    then $\bar a$ and $\bar a'$ have the same type over $BC$.
  \end{enumerate}
If $\indep{A}{B}{C}$ is only defined for non-empty $B$, we say that
$\Indep$ is a \em{local} independence relation on $\M$.
\end{definition}
\noindent It is easy to see that the axioms imply
\[\indep{A}{B}{C}\;\Leftrightarrow\;\indep{AB}{B}{C}
\;\Leftrightarrow\;\indep{A}{B}{BC}.\] Also, on the basis of the other
axioms Stationarity follows from the following special case for single
elements:\\

\quad\parbox{30em}{(Stationarity') \it If $a$ and $a'$ have the same type
  over $B$ and are both independent from $c$ over $B$, then $a$ and
  $a'$ have the same type over $Bc$.}\\

\begin{example}\label{examples}\upshape
  \indent\par
  \begin{enumerate}
  \item
    By a well-known construction of Fra\"iss\'e, a countable class
    $\C$ of \fg structures, closed under \fg substructures and satisfying the
    amalgamation and joint embedding properties has a \Frl: this is a
    countable structure $\M$ whose \fg substructures are - up to
    isomorphism - exactly the elements of $\C$ and which has the
    property that any isomorphism between \fg substructures extends to
    a global automorphism of $\M$ (see~\cite{TZ}, Ch.\ 4.4 for more
    details).

    In many cases the amalgamation property of $\C$ is verified by the
    existence of a ``canonical'' amalgam $X\otimes_Y Z$ of $X$ and $Z$
    over the common substructure $Y$ which is functorial in the sense
    that automorphisms of the factors $X$ and $Z$ fixing $Y$
    elementwise will extend to the amalgam. This can then be
    used to define two finite subsets $A$ and $C$ of $\M$ to be
    independent over $B$ if $\langle A\cup B\cup C\rangle$ is
    isomorphic to $\langle A\cup B\rangle \otimes_{\langle
      B\rangle}\langle B\cup C\rangle$ under an isomorphism
    commuting with the embeddings, where $\langle S\rangle$
    denotes the substructure generated by $S$. At this level of
    generality, the independence notion
    satisfies only Existence, Invariance and Stationarity.
    In the following cases it defines either a stationary independence
    relation, or a local stationary independence relation, and in the local
    case it suffices to have $\indep{A}{B}{C}$ defined for $B$ non-empty:
    \begin{enumerate}
    \item
      The class $\C$ of finite metric spaces with distances in a countable
      additive subsemigroup $R$ of the positive reals has canonical
      amalgamation over a nonempty base: If
      $B$ is non-empty and $A$ and $C$ are two extensions of $B$ which
      intersect exactly in $B$, we can put $A\otimes_BC=A\cup C$ with
      the metric defined by
      \[d(a,c)=\min\{d(a,b)+d(b,c)\colon b\in B\}\] if $a\in B,c\in
      C$. The \Frl is the $R$-valued Urysohn space $R\U$. Then
      $\indep{A}{B}{C}$ if and only if for all $a\in A,c\in C$ there
      is some $b\in B$ with $d(a,c)=d(a,b)+d(b,c)$. Note that
      independence over the empty set is not defined.  The complete
      Urysohn space $\U$ is the completion of $\Q\U$.

    \item The bounded Urysohn space $\U_1$ enjoys similar
      properties with respect to the class of  finite metric spaces
      with diameter at most~$1$ and is constructed in a
      similar fashion, as the completion of a Fraisse limit. We
      let $A\otimes_BC$ denote the metric space such that for $a\in
      A,c\in C$ the distance of $a$ and $c$ is the minimum
      of \[\{d(b,a)+d(b,c)\colon b\in B\}\cup\{1\}.\] Here $B$ may be
      empty.

    \item\label{free} If $\C$ is a class of relational structures, we
      may put $A\otimes_B C$ as the \emph{free amalgam}, i.e.\ the
      structure on the set $A\cup C$ with no new relations on
      $(A\setminus B)\cup(C\setminus B)$. Then $A$ and $C$ are
      independent over $B$ if and only if whenever $R(d_1,\ldots,d_n)$
      holds for elements $d_1,\ldots d_n$ of $B\cup A\cup C$ then
      either all $d_i$ are in $B\cup A$ or all $d_i$ are in $B\cup
      C$. The random graph and random hypergraphs, the $K_n$-free
      graphs and their hypergraph analogs arise in this way. Again $B$ may be empty here.
    \end{enumerate}
  \item Let $T$ be a stable complete theory and $\M$ an
    $\omega$-homogeneous countable model on $T$.  Then
    forking--independence has all properties of Definition~\ref{D:ind}
    except possibly Stationarity, see \cite[Ch.\ 8.5]{TZ}. For
    Stationarity we have to assume that all $1$--types are stationary,
    which implies that all types are stationary. (An example of such a theory
    is the theory of trees with infinite valency.)
  \end{enumerate}
\end{example}

To see that Transitivity holds in the Urysohn spaces assume
$\indep{A}{B}{C}$ and $\indep{A}{BC}{D}$ and consider $a\in A$ and
$d\in D$. By assumption there is some $x\in BC$ with
$d(a,d)=d(a,x)+d(x,d)$. If $x\in C$, there is some $b\in B$ with
$d(a,x)=d(a,b)+d(bx)$. This implies $d(a,d)=d(a,b)+d(b,d)$, as
required. The rest is clear.

The independence relations in examples 1(a), 1(b) and in 1(c) for
binary relations have stronger properties than forking-independence
has in general, notably
\[\indep{A}{B}{C} \text{ and } B\subset B'\; \text{ implies }\;
\indep{A}{B'}{C}.\]
However, our proofs do not make use of these additional properties.

\begin{definition}
  We say that a finite tuple $\bar x$ is \em{independent} from a tuple
  $\bar y$ over $A;B$ if
  \[\indep{\bar x}{A}{B\bar y}\;\text{ and }\;
  \indep{\bar xA}{B}{\bar y}.\]
\end{definition}
\begin{lemma}\label{L:ind}
  Let $\Indep$ be a stationary independence relation on $\M$.
  Then the following holds.
  \begin{enumerate}
  \item\label{ind_krit} For $\bar x$ to be independent from $\bar y$
    over $A;B$ it is enough to have $\indep{\bar x}{A}{B}$ and
    $\indep{\bar xA}{B}{\bar y}$.
  \item\label{ind_ex} \textup{(Existence)} Let $p$ be a type over $A$
    and $q$ a type over $B$. Then there are realisations $\bar x$ of
    $p$ and $\bar y$ of $q$ such that $\bar x$ is independent from
    $\bar y$ over $A; B$. The type $\tp(\bar x\bar y/AB)$ is uniquely
      determined.
  \item\label{ind_trans} \textup{(Transitivity)} If $\bar x$ is
    independent from $\bar y$ over $A; B$ and $\bar x'$ is independent
    from $\bar y'$ over $\bar xA,\bar yB$, then $\bar x\bar x'$ is
      independent from $\bar y\bar y'$ over $A; B$.
  \item\label{ind_sym} \textup{(Symmetry)} If $\bar x$ is independent
    from $\bar y$ over $A,B$, then $\bar y$ is independent from $\bar
    x$ over $B; A$.
  \end{enumerate}
\end{lemma}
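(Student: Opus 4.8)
The plan is to reduce all four statements to the basic axioms of Definition~\ref{D:ind} together with one strengthening of Transitivity that is not listed as an axiom but follows from the others.

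\textbf{The key auxiliary fact.} First I would establish that for a stationary independence relation
\[\indep{A}{B}{C}\ \text{and}\ \indep{A}{BC}{D}\ \Longrightarrow\ \indep{A}{B}{CD};\]
call this \emph{full transitivity} (the Transitivity axiom only yields the weaker conclusion $\indep{A}{B}{D}$). By Existence pick $A'$ realising $\tp(A/B)$ with $\indep{A'}{B}{CD}$; Monotonicity then gives $\indep{A'}{B}{C}$ and $\indep{A'}{BC}{D}$. Since $A,A'$ have the same type over $B$ and are both independent from $C$ over $B$, Stationarity gives $\tp(A/BC)=\tp(A'/BC)$; since they are then both independent from $D$ over $BC$, Stationarity again gives $\tp(A/BCD)=\tp(A'/BCD)$. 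Hence $\tp(ABCD)=\tp(A'BCD)$, so by Invariance $\indep{A}{B}{CD}$ transfers from $A'$ to $A$. From full transitivity and Symmetry one gets at once the \emph{left additivity} $\indep{\bar x}{A}{C}$ and $\indep{\bar x'}{\bar xA}{C}$ imply $\indep{\bar x\bar x'}{A}{C}$ (symmetrise, apply full transitivity over $A$, symmetrise back); we also use freely the equivalences $\indep{A}{B}{C}\Leftrightarrow\indep{AB}{B}{C}\Leftrightarrow\indep{A}{B}{BC}$ already recorded in the text, as well as the fact that $AB$ and $BA$ denote the same set.

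\textbf{Deducing the four statements.} Part~\ref{ind_sym} is immediate: Symmetry applied to each of the two defining clauses of ``$\bar x$ independent from $\bar y$ over $A;B$'' produces the two defining clauses of ``$\bar y$ independent from $\bar x$ over $B;A$''. For~\ref{ind_krit}, Symmetry and Monotonicity turn $\indep{\bar xA}{B}{\bar y}$ into $\indep{\bar x}{AB}{\bar y}$, and combining this with the hypothesis $\indep{\bar x}{A}{B}$ via full transitivity gives $\indep{\bar x}{A}{B\bar y}$; together with the hypothesis $\indep{\bar xA}{B}{\bar y}$ this is exactly independence over $A;B$. For~\ref{ind_ex}, use Existence to realise $q$ by some $\bar y$ with $\indep{\bar y}{B}{A}$, then Existence again to realise $p$ by some $\bar x$ with $\indep{\bar x}{A}{B\bar y}$; the remaining clause $\indep{\bar xA}{B}{\bar y}$ follows by feeding $\indep{\bar y}{B}{A}$ together with the Symmetry/Monotonicity consequence $\indep{\bar y}{AB}{\bar x}$ of $\indep{\bar x}{A}{B\bar y}$ into full transitivity and symmetrising. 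Uniqueness of $\tp(\bar x\bar y/AB)$ then follows from Stationarity applied twice: any two such pairs have the same $\tp(\bar y/AB)$ (both $\bar y$'s realise $q$ and are independent from $A$ over $B$), so we may assume the $\bar y$'s agree, whereupon $\tp(\bar x/AB\bar y)$ is determined since both $\bar x$'s realise $p$ and are independent from $B\bar y$ over $A$. For~\ref{ind_trans}, unwind both hypotheses into their four defining clauses and use Monotonicity/Symmetry/Transitivity to obtain $\indep{\bar x}{AB\bar y}{\bar y'}$ and $\indep{\bar y}{B\bar xA}{\bar x'}$; full transitivity then upgrades these (with the clauses $\indep{\bar x}{A}{B\bar y}$ and $\indep{\bar xA}{B}{\bar y}$) to $\indep{\bar x}{A}{B\bar y\bar y'}$ and $\indep{\bar y}{B}{\bar x\bar x'A}$; finally left additivity on each side, using $\indep{\bar x'}{\bar xA}{B\bar y\bar y'}$ and its symmetric partner $\indep{\bar y'}{\bar yB}{\bar x\bar x'A}$, yields the two clauses $\indep{\bar x\bar x'}{A}{B\bar y\bar y'}$ and $\indep{\bar x\bar x'A}{B}{\bar y\bar y'}$ of independence over $A;B$.

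\textbf{Main obstacle.} The only genuinely nonformal step is full transitivity: the Transitivity axiom is ``lossy'' as stated, and upgrading it really does require the Existence/Stationarity argument above. Everything afterwards is bookkeeping with Invariance, Monotonicity and Symmetry; the one point that needs a moment's care is the local case, where independence is only defined over nonempty bases — but every base occurring in the argument contains one of the bases appearing in the hypotheses, hence is nonempty, so the local case requires no separate treatment.
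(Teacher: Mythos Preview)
Your proof is correct and follows the same route as the paper's own argument; in fact you are more careful than the authors on one point. The paper's proof of parts \ref{ind_krit} and \ref{ind_trans} invokes ``Transitivity'' to pass from $\indep{\bar x}{A}{B}$ and $\indep{\bar x}{AB}{\bar y}$ to $\indep{\bar x}{A}{B\bar y}$, but the Transitivity axiom as stated in Definition~\ref{D:ind} only gives $\indep{\bar x}{A}{\bar y}$ --- the stronger conclusion is exactly what you isolate and prove as \emph{full transitivity} via Existence and Stationarity. The paper uses this silently; your auxiliary fact fills a genuine (if small) gap in the exposition.

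The remaining differences are cosmetic: for part~\ref{ind_ex} the paper chooses $\bar x$ first with $\indep{\bar x}{A}{B}$ and then $\bar y$ with $\indep{\bar xA}{B}{\bar y}$, appealing to part~\ref{ind_krit}, whereas you reverse the order; and you spell out the two-step Stationarity argument for uniqueness of $\tp(\bar x\bar y/AB)$, which the paper leaves implicit. Your treatment of part~\ref{ind_trans} via left additivity is exactly what the paper does (again calling it ``Transitivity'').
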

\begin{proof}\indent\par\noindent
  \ref{ind_krit}. By Symmetry and Monotonicity $\indep{\bar xA}{B}{\bar
    y}$ implies $\indep{\bar x}{AB}{\bar y}$. By Transitivity and
  $\indep{\bar x}{A}{B}$ this implies $\indep{\bar x}{A}{B\bar y}$.\\

  \noindent\ref{ind_ex}. Choose $\bar x$ such that $\indep{\bar
    x}{A}{B}$ and then $\bar y$ such that $\indep{\bar xA}{B}{\bar
    y}$.\\

  \noindent\ref{ind_trans}. Note that $\indep{\bar xA}{B\bar y}{\bar
    y'}$ implies $\indep{\bar x}{AB\bar y}{\bar y'}$.  From
  $\indep{\bar x}{A}{B\bar y}$ and Transitivity we get $\indep{\bar
    x}{A}{B\bar y\bar y'}$. This and $\indep{\bar x'}{\bar xA}{B\bar
    y\bar y'}$\/ imply $\indep{\bar x\bar x'}{A}{B\bar y\bar y'}$ by
  Transitivity. Similarly one proves $\indep{\bar x\bar x'A}{B}{\bar
    y\bar y'}$.\\

  \noindent\ref{ind_sym}. This follows directly from the symmetry of
  $\Indep$.
\end{proof}
\begin{definition}
  Let $\Indep$ be a (local) independence relation on $\M$ and
  $g\in\G$.  For a finite set $X$ and $p$ a type over $X$ we say that
  $g$ moves a realisation $\bar x$ of $p$ maximally if $\bar x$ is
  independent from $g(\bar x)$ over $X; g(X)$. We say that $g$ moves
  maximally if for all (non-empty) finite sets $X$ and all types $p$
  over $X$, $g$ moves some realisation of $p$ maximally.
\end{definition}

\noindent Note that part~(\ref{ind_sym}) of Lemma \ref{L:ind} implies
that $g$ moves maximally if and only if $g\inv$ does.

If $\M$ is the countable infinite set with no structure, $A$ and $C$
are independent over $B$ if $A\cap C\subset B$. Hence a permutation of $\M$
moves maximally if and only it has infinite support. More generally if
$\M$ is an $\omega$--saturated countable strongly minimal structure
in which algebraic and definable closure coincide
and  $\indep{}{}{}$ is algebraic (i.e.\ forking-) independence, then
$g$ moves maximally if and only if $g$ is unbounded in the sense of
\cite{Lascar1992}. To see this note that an automorphism $g$ of a strongly minimal
structure $\M$ is bounded in the sense of \cite{Lascar1992} if and only if there is a finite set $X$ such that
for any $a\in\M$ we have $g(a)\in acl(aX)$.

\begin{lemma}
  For an automorphism to move maximally it suffices to move
  realisations of $1$-types (i.e.\ types of single elements)
  maximally.
\end{lemma}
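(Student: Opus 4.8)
The plan is to prove this by induction on the length of the tuple $\bar x$, using the Transitivity property for the generalized independence notion established in Lemma~\ref{L:ind}(\ref{ind_trans}). Suppose we already know that $g$ moves maximally some realisation of every type of length $<n$, and let $p$ be a type over a finite set $X$ of length $n$. Write an intended realisation in the form $\bar x c$ where $\bar x$ has length $n-1$ and $c$ is a single element; here $p$ restricts to a type $p_0 = \tp(\bar x/X)$ of length $n-1$ and $c$ realises the $1$-type $q = \tp(c / X\bar x)$ over the finite set $X\bar x$.

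First I would apply the inductive hypothesis to find a realisation $\bar x_0$ of $p_0$ that $g$ moves maximally, i.e.\ $\bar x_0$ is independent from $g(\bar x_0)$ over $X; g(X)$. Then I would apply the $1$-type hypothesis to the type $q_0 = \tp(c_0/X\bar x_0)$ (the image of $q$ under the automorphism taking $\bar x$ to $\bar x_0$) to obtain a realisation $c_1$ of $q_0$ that $g$ moves maximally over its base, i.e.\ $c_1$ is independent from $g(c_1)$ over $X\bar x_0; g(X)g(\bar x_0)$. The point is then that $\bar x_0 c_1$ realises $p$ (since $c_1$ realises $q_0$ over $X\bar x_0$, the pair $\bar x_0 c_1$ has the same type over $X$ as $\bar x c$), and by Lemma~\ref{L:ind}(\ref{ind_trans}), applied with $A = X$, $B = g(X)$, $\bar y = g(\bar x_0)$, $\bar y' = g(c_1)$, the combined tuple $\bar x_0 c_1$ is independent from $g(\bar x_0 c_1) = g(\bar x_0)g(c_1)$ over $X; g(X)$. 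That is precisely the statement that $g$ moves the realisation $\bar x_0 c_1$ of $p$ maximally.

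The one point requiring care is the bookkeeping that makes the transitivity step apply cleanly: I need "$\bar x_0$ is independent from $g(\bar x_0)$ over $X; g(X)$" together with "$c_1$ is independent from $g(c_1)$ over $X\bar x_0; g(X)g(\bar x_0)$", which is exactly the hypothesis shape of Lemma~\ref{L:ind}(\ref{ind_trans}) with the first tuple being $\bar x_0$, the second $c_1$, the base being $X; g(X)$ and the extended base $\bar x_0 X; g(\bar x_0) g(X)$. Since $g$ is an automorphism, $g(X\bar x_0) = g(X)g(\bar x_0)$, so the bases match up correctly, and the conclusion $\indep{\bar x_0 c_1}{X}{g(X)\,g(\bar x_0 c_1)}$ and $\indep{\bar x_0 c_1 X}{g(X)}{g(\bar x_0 c_1)}$ is what we want. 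In the local case one should note that $X$ and hence all the bases appearing are nonempty by hypothesis, so the independence statements are all defined.

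The main obstacle, such as it is, is simply making sure the types line up under the automorphisms: after replacing $\bar x$ by $\bar x_0$ one must transport the $1$-type $q$ to the corresponding $1$-type over $X\bar x_0$, and after replacing the new element by $c_1$ one must check that the whole $n$-tuple still realises $p$ over $X$ — both of which follow from Invariance and the definition of "realises the type" via an automorphism fixing $X$ pointwise. Beyond that the proof is a direct assembly of the inductive hypothesis, the $1$-type hypothesis, and Lemma~\ref{L:ind}(\ref{ind_trans}).
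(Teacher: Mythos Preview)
Your proposal is correct and is essentially the paper's own argument spelled out in full: the paper's proof is the single line ``This follows from Lemma~\ref{L:ind}(\ref{ind_trans}),'' and what you have written is precisely the induction on tuple length that this line is encoding. The bookkeeping you flag (transporting the $1$-type along the automorphism sending $\bar x$ to $\bar x_0$, and matching the bases via $g(X\bar x_0)=g(X)g(\bar x_0)$) is exactly what is needed and is handled correctly.
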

\begin{proof}
  This follows from Lemma \ref{L:ind}(\ref{ind_trans}).
\end{proof}

Here is our main result, which will be proved in Section
\ref{Sec:proof}:

\begin{theorem}\label{T:main}
  Suppose that $\M$ is a countable structure with a local stationary
  independence relation and let $g\in\G=\aut(\M)$ move maximally. If
  $\G$ contains a dense conjugacy class, then any element of $\G$ is
  the product of eight conjugates of $g$.
\end{theorem}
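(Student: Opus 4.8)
The plan is to combine a back-and-forth construction with the density of the conjugacy class to reduce the general statement to the following two reusable facts. First, a \emph{reduction lemma}: if $h\in\G$ moves maximally and $f\in\G$ is arbitrary, then $f$ can be written as a product of a bounded number of conjugates of $h$ and $h\inv$ together with \emph{one} element that is conjugate to $h$ via an approximation argument. Second, a \emph{composition lemma} saying that if $g$ moves maximally then so does any element built from $g$-conjugates in the way the construction produces them, so that the induction does not leave the class of maximally-moving elements. The heart of the matter is the first fact, and I expect the constant $8$ to come out as (roughly) $2\times 4$ or $4+4$, i.e.\ two applications of a "write $f$ as a product of four conjugates of $g$" step, one for $f$ and one to fix up the error term coming from density.

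Concretely, here is how I would carry out the main step. Fix $g$ moving maximally and a target $f\in\G$. Using a back-and-forth enumeration $\M=\{m_0,m_1,\dots\}$, I would build two automorphisms $g_1,g_2$, each a conjugate of $g$, with $f=g_1g_2$, as follows. At each finite stage we have a finite partial picture; we want to choose the next values so that the partial map $x\mapsto g_1(x)$ is (a restriction of) a conjugate of $g$ and likewise for $g_2$, while $g_1g_2$ agrees with $f$ on the part of $\M$ enumerated so far. The key move is: given the current finite domain $X$ and the requirement $g_1g_2\restriction X = f\restriction X$, use \textbf{Existence} from Lemma~\ref{L:ind}(\ref{ind_ex}) to pick an intermediate finite tuple that is independent from $X$ over the appropriate base and from its image over the image base — this is exactly the statement that "$g_2$ moves $X$ maximally" — and then set $g_1$ to send that intermediate tuple to $f(X)$ compatibly. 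Because $g$ moves maximally, the constraints "$g_i$ moves this finite set maximally" are satisfiable at every stage, and \textbf{Stationarity} guarantees the resulting type is uniquely determined, so the partial maps cohere. The locality of the independence relation (only over nonempty bases) forces us to carry along a fixed nonempty "anchor" set throughout, which is a routine but genuine bookkeeping point.

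The role of the dense conjugacy class is to upgrade "product of conjugates of maximally-moving elements that are built from $g$" to "product of conjugates of $g$ itself". Since $\G$ has a dense conjugacy class, any automorphism $h$ that moves maximally can be approximated on any finite set by a conjugate of $g$; by another back-and-forth one shows that $h$ itself equals $g^{t_1}\cdots g^{t_k}$ for small $k$ and suitable $t_i\in\G$ — this is where one pays a factor of $2$ (an approximation of $h$ by $g^t$ plus a correction term $g^{s}$ to account for the discrepancy, the correction again moving maximally because it is $\approx\mathrm{id}$ only on a finite set but arbitrary elsewhere). Putting the pieces together: $f=g_1g_2$ with $g_1,g_2$ moving maximally (by the composition lemma), each $g_i$ is a product of four conjugates of $g$, giving $f$ as a product of eight conjugates of $g$.

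The main obstacle, I expect, is verifying that the elements $g_1,g_2$ produced in the factorization step genuinely \emph{move maximally} — i.e.\ controlling the behaviour of $g_i$ not just on the enumerated points but uniformly over \emph{all} finite sets and \emph{all} types over them, simultaneously with the coherence condition $g_1g_2\restriction X=f\restriction X$. This requires interleaving, inside the single back-and-forth, (i) the enumeration of $\M$ for the domains/ranges of $g_1$ and $g_2$, (ii) the enumeration of all pairs (finite set, type over it) whose maximal movement must be witnessed, and (iii) the synchronization forcing $g_1g_2=f$. Making all three demands compatible at each finite stage is exactly where \textbf{Existence}, \textbf{Stationarity}, and Lemma~\ref{L:ind} get used in concert, and getting the constant down to $8$ presumably depends on organizing this interleaving economically rather than on any deep new idea.
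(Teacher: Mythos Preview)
Your proposal has a genuine structural gap, and it lies exactly where you anticipate difficulty---but you misdiagnose what the difficulty is.

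You oscillate between two readings of the factorisation $f=g_1g_2$: in one, $g_1,g_2$ are each \emph{conjugates of $g$}; in the other, they merely \emph{move maximally}. Neither version closes. If $g_1,g_2$ are to be actual conjugates of $g$, a back-and-forth cannot produce them: at each finite stage you can arrange that the partial map agrees with \emph{some} conjugate of $g$ on that finite set, but the limit of such partial maps need not be conjugate to $g$. ``Being a conjugate of $g$'' is an analytic, not a closed, condition on $\G$; finite approximations do not decide it. If instead $g_1,g_2$ only move maximally, then your Step~2 (``each $g_i$ is a product of four conjugates of $g$'') is essentially the theorem itself with $g_i$ in place of $f$, so the argument is circular. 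Your sketch of Step~2---approximate $h$ by $g^t$ on a finite set, then correct by $g^s$---does not work: the correction $(g^t)^{-1}h$ has no reason to be a conjugate of $g$, nor to move maximally (being the identity on a finite set does not imply this), nor to be any more tractable than $h$ was. So the obstacle is not, as you write, verifying that the constructed $g_i$ move maximally; it is getting from ``moves maximally'' to ``bounded product of conjugates of $g$'' without assuming what you are proving.

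The paper's route is not a refinement of this back-and-forth idea but a different argument. One proves (Proposition~\ref{P:dense}) a \emph{topological} property of the four-fold product map $\phi\colon\G^4\to\G$, $(h_1,\dots,h_4)\mapsto g^{h_1}\cdots g^{h_4}$: the image of every nonempty open set is somewhere dense. The combinatorics of independence (Propositions~\ref{P:zurichtung} and~\ref{P:vier}) are used only to establish this density statement at the level of finite partial isomorphisms---which is exactly what a back-and-forth is suited for. Then Baire category takes over: $\phi(\G^4)$ is non-meagre, has the Baire property as an analytic set, and is conjugation-invariant; the dense conjugacy class enters via \cite[Theorem~8.46]{Kechris} to force $\phi(\G^4)$ to be comeagre. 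The same holds with $g^{-1}$ in place of $g$, and two comeagre sets meet after any translation, yielding $8=4+4$. Thus the dense conjugacy class is used not for pointwise approximation but to pass from ``non-meagre and invariant'' to ``comeagre''---a genuinely non-constructive step that your scheme does not replace.
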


We note that for a structure with a stationary independence relation
the assumption that $\G$ contains a dense conjugacy class is always
satisfied:

\begin{lemma}\label{L:kechris_rosendal}
  The automorphism group of a countable structure $\M$ with a stationary
  independence relation has a dense conjugacy class. The same holds for a local
  stationary independence relation if for all finite tuples $\bar a$
  and $\bar b$ in $\M$ realising
  the same type there is some element $c$ with
  $\tp(\bar a/c)=\tp(\bar b/c)$.
\end{lemma}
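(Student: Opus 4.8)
The plan is to use the standard Baire category criterion for dense conjugacy classes (due, in this form, to Kechris and Rosendal). Since $\M$ is countable, $\G$ is a Polish group and the conjugation action of $\G$ on itself is continuous; hence it is enough to show that this action is \emph{topologically transitive}, i.e.\ that for any two non-empty open $U,V\subseteq\G$ there is $h\in\G$ with $hUh\inv\cap V\neq\emptyset$. Indeed, then $\bigcup_{h\in\G}hU_nh\inv$ is open and dense for every member $U_n$ of a countable basis, so the intersection of these sets over all $n$ is comeager, and any $g$ in it has dense conjugacy class. Basic open subsets of $\G$ have the form $U_p=\{g\in\G:p\subseteq g\}$ for $p$ a finite partial automorphism of $\M$ (a finite partial injection extending to some element of $\G$), and unwinding the definitions, topological transitivity is equivalent to: for all finite partial automorphisms $p,q$ of $\M$ there is $h\in\G$ such that $p\cup hqh\inv$ is again a finite partial automorphism. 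This is the statement I would prove.

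Assume first that $\Indep$ is a (global) stationary independence relation, and write $p\colon\bar a\mapsto\bar a'$ and $q\colon\bar b\mapsto\bar b'$ with the tuples enumerating domains and images. By Existence choose a realisation $\bar c\bar c'$ of $\tp(\bar b\bar b')$ that is independent from $\bar a\bar a'$ over $\emptyset$ (and, harmlessly, disjoint from $\bar a\bar a'$), and fix $h\in\G$ with $h(\bar b\bar b')=\bar c\bar c'$, so that $hqh\inv\colon\bar c\mapsto\bar c'$. Since $\bar c\bar c'$ is disjoint from $\bar a\bar a'$, the map $p\cup hqh\inv\colon\bar a\bar c\mapsto\bar a'\bar c'$ is automatically a well defined partial injection, and it will be a partial automorphism as soon as $\tp(\bar a\bar c)=\tp(\bar a'\bar c')$. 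To see this, pick $g_1\in\G$ with $g_1\bar a=\bar a'$. By Invariance $g_1\bar c$ is independent from $\bar a'$ over $\emptyset$, and
\[\tp(g_1\bar c)=\tp(\bar c)=\tp(\bar b)=\tp(\bar b')=\tp(\bar c'),\]
so Stationarity gives $\tp(g_1\bar c/\bar a')=\tp(\bar c'/\bar a')$, and hence $\tp(\bar a\bar c)=\tp(g_1\bar a\,g_1\bar c)=\tp(\bar a'\bar c')$, as required.

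When $\Indep$ is only a local stationary independence relation the argument must be carried out over a non-empty base, and this is exactly where the extra hypothesis is used. Given $p\colon\bar a\mapsto\bar a'$, applying the hypothesis to $\bar a\equiv\bar a'$ produces an element $c$ with $\tp(\bar a/c)=\tp(\bar a'/c)$, so that $p$ extends to a finite partial automorphism fixing $c$; and here is the crux, one wants the hypothesis to likewise allow $q$ to be extended so as to fix this \emph{same} element $c$. Granting such a common anchor $c$, one simply repeats the previous paragraph with all types and independences taken over $\{c\}$: realise $\tp(\bar b\bar b'/c)$ by some $\bar c\bar c'$ independent from $\bar a\bar a'$ over $\{c\}$, note that $\tp(\bar c/c)=\tp(\bar c'/c)$ because $q$ fixes $c$, and deduce $\tp(\bar a\bar c/c)=\tp(\bar a'\bar c'/c)$ from Stationarity over $\{c\}$ exactly as above; so a conjugate of $q$ is compatible with $p$.

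The Baire category reduction and the global amalgamation are routine. The step I expect to be the real obstacle is the one just indicated in the local case: squeezing out of the hypothesis a single element $c$ fixed by suitable extensions of \emph{both} $p$ and $q$ (and, subordinate to that, the bookkeeping needed to keep the domains and images of $p$ and of the chosen conjugate of $q$ disjoint, so that their union is automatically a partial bijection). Once such a common base is available, Existence and Stationarity finish the argument just as in the global case.
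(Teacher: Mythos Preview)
Your reduction to topological transitivity and your global-case argument are correct and coincide with the paper's proof. The paper phrases the criterion as: given $\bar x,\bar y,\bar a,\bar b$ with $\tp(\bar x)=\tp(\bar y)$ and $\tp(\bar a)=\tp(\bar b)$, find $\bar x',\bar y'$ with $\tp(\bar x'\bar y')=\tp(\bar x\bar y)$ and $\tp(\bar x'\bar a)=\tp(\bar y'\bar b)$; it then takes $\bar x'\bar y'$ realising $\tp(\bar x\bar y)$ with $\indep{\bar x'\bar y'}{}{\bar a\bar b}$ and invokes Stationarity, exactly your move with the roles of the two partial isomorphisms swapped. Incidentally, the disjointness bookkeeping you worry about is unnecessary once the criterion is stated this way: the equality $\tp(\bar x'\bar a)=\tp(\bar y'\bar b)$ already forces any coincidences on the domain side to be matched on the image side.

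For the local case you have put your finger on precisely the point at which the paper itself is terse: its entire local argument is the sentence ``let $c$ be such that $\tp(\bar a/c)=\tp(\bar b/c)$ and work over $c$.'' If one runs the global argument over $c$, the two applications of Stationarity require both $\tp(\bar a/c)=\tp(\bar b/c)$ \emph{and} $\tp(\bar x'/c)=\tp(\bar y'/c)$, whereas the hypothesis produces a witnessing element for only one pair. So the obstacle you flag is real, and the paper does not visibly do more than you to dissolve it. One small sharpening of your diagnosis: since the second partial isomorphism is the one being conjugated, you do not need it to fix your chosen $c$; what is needed is that the element witnessing the hypothesis for that pair lie in the $\G$-orbit of $c$, so that after conjugation both halves have the same type over $c$. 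In the paper's only application of the local clause (Corollary~\ref{C:Urydense}) this is a non-issue, since one may take $c$ far from all four tuples simultaneously, and then it serves both pairs at once.
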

\begin{proof}
  It is immediate that $\G$ contains a dense conjugacy class if and
  only the following is true: given finite tuples $\bar x,\bar y,\bar
  a, \bar b$ with $\tp(\bar x)=\tp(\bar y)$ and $\tp(\bar a)=\tp(\bar
  b)$ there are tuples $\bar x', \bar y'$ such that $\tp(\bar x'\bar
  y')=\tp(\bar x\bar y)$ and $\tp(\bar x'\bar a)=\tp(\bar y'\bar
  b)$. If $\M$ has a stationary independence relation, then we can
  choose $\bar x'y'$ realising $\tp(\bar x\bar y)$ with $\indep{\bar
    x'y'}{}{\bar a\bar b}$.  By stationarity we then have $\tp(\bar
  x'\bar a)=\tp(\bar y'\bar b)$.

  If $\M$ has a local independence relation, let $c$ be such that
  $\tp(\bar a/c)=\tp(\bar b/c)$ and work over $c$.
\end{proof}
\begin{corollary}\label{C:Urydense}
  The automorphism group of the Urysohn space has a dense conjugacy class.
\end{corollary}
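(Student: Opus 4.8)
The plan is to deduce this from Lemma~\ref{L:kechris_rosendal} applied to the rational Urysohn space $\Q\U$, which by Example~\ref{examples}(1a) carries a local stationary independence relation and is the \Frl of the class of finite rational-valued metric spaces. The only thing that then has to be checked is the extra hypothesis in the local case of the lemma: for any two finite tuples $\bar a=(a_1,\dots,a_n)$ and $\bar b=(b_1,\dots,b_n)$ from $\Q\U$ with $\tp(\bar a)=\tp(\bar b)$, there should be a point $c\in\Q\U$ with $\tp(\bar a/c)=\tp(\bar b/c)$.

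So let $\bar a,\bar b$ be such tuples. That $\tp(\bar a)=\tp(\bar b)$ just says that $a_i\mapsto b_i$ is an isometry between finite subsets of $\Q\U$, i.e.\ $d(a_i,a_j)=d(b_i,b_j)$ for all $i,j$. I would choose a positive rational $R$ with $2R\ge\diam\{a_1,\dots,a_n,b_1,\dots,b_n\}$ (possible since this diameter is rational) and adjoin to the finite subset $\{a_1,\dots,a_n,b_1,\dots,b_n\}$ a new point at distance $R$ from every $a_i$ and every $b_i$. The only triangle inequalities to verify for this one-point extension are $|R-R|=0\le d(p,q)\le 2R$ for $p,q$ among the old points, and these hold by the choice of $R$; hence the extension is again a finite rational metric space and is realised in $\Q\U$ because $\Q\U$ is the corresponding \Frl. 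Let $c$ be such a realisation. Then $c\neq a_i$ and $c\neq b_i$ for all $i$ (the distances are $R>0$), so the bijection fixing $c$ and sending $a_i$ to $b_i$ is an isometry between the finite subsets $\{a_1,\dots,a_n,c\}$ and $\{b_1,\dots,b_n,c\}$ of $\Q\U$. By homogeneity of $\Q\U$ it extends to an automorphism fixing $c$ and mapping $\bar a$ to $\bar b$, so $\tp(\bar a/c)=\tp(\bar b/c)$. Lemma~\ref{L:kechris_rosendal} now yields an $h_0\in\aut(\Q\U)$ with dense conjugacy class.

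The same conclusion holds for the isometry group $G=\mathrm{Isom}(\U)$ of the complete Urysohn space: every isometry of $\Q\U$ extends uniquely to $\U$, which identifies $\aut(\Q\U)=\mathrm{Isom}(\Q\U)$ with a dense subgroup of $G$ for the topology of pointwise convergence. The $\aut(\Q\U)$-conjugacy class of $h_0$, being dense in $\aut(\Q\U)$ and hence in $G$, is contained in the $G$-conjugacy class of $h_0$, which is therefore dense in $G$.

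There is no serious obstacle here; the argument is essentially bookkeeping inside the definition of the \Frl. The one point needing a moment's thought is the choice of $R$: it must be at least half the diameter of the \emph{union} $\{a_1,\dots,a_n\}\cup\{b_1,\dots,b_n\}$, so that the cross distances $d(a_i,b_j)$ are also dominated --- controlling the diameters of $\bar a$ and $\bar b$ separately would not suffice. The only input from outside the present framework is the density of $\mathrm{Isom}(\Q\U)$ in $\mathrm{Isom}(\U)$, which is standard.
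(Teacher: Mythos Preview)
Your proof is correct and takes essentially the same approach as the paper: verify the extra hypothesis of Lemma~\ref{L:kechris_rosendal} by exhibiting a point $c$ at a fixed large distance $R$ from every $a_i$ and $b_i$ (the paper says only ``find a point $c$ at sufficiently large distance from $\bar a,\bar b$''). You additionally supply the passage from the countable $\Q\U$ to the complete $\U$ via the density of $\aut(\Q\U)$ in $\mathrm{Isom}(\U)$; the paper omits this step, and in fact only invokes the corollary for the countable space $R\U$ in the proof of Theorem~\ref{T:urysohn}, so your argument actually delivers a bit more than the paper needs or proves.
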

\begin{proof}
  Just note that given $\bar a,\bar b$ satisfying the same type, we can find a point $c$ at sufficiently large distance from
  $\bar a,\bar b$ such that $\tp(\bar a/c)=\tp(\bar b/c)$.
\end{proof}

\begin{corollary}\label{C:main}
  Suppose that $\M$ is a countable structure with a stationary
  independence relation and let $g\in\G$ move maximally. Then any
  element of $\G$ is the product of eight conjugates of $g$.\qed
\end{corollary}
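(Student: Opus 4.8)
The plan is to obtain this immediately by specialising Theorem~\ref{T:main}, so the only work is to check that its hypotheses are met. First I would note that a stationary independence relation in the sense of Definition~\ref{D:ind} is in particular a \emph{local} stationary independence relation: the local notion differs only in that $\indep{A}{B}{C}$ need not be defined when $B$ is empty, so dropping no data from a full stationary independence relation yields a local one. Consequently the notion of an automorphism moving maximally, together with all of Lemma~\ref{L:ind}, applies verbatim to $\M$, and the phrase ``$g$ moves maximally'' has its intended meaning.

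Next I would invoke Lemma~\ref{L:kechris_rosendal}, whose first clause says that the automorphism group of a countable structure carrying a stationary independence relation always has a dense conjugacy class; no side condition is needed in the non-local case. Thus $\M$ is a countable structure with a local stationary independence relation, $g\in\G$ moves maximally, and $\G$ contains a dense conjugacy class, so Theorem~\ref{T:main} applies directly and gives that every element of $\G$ is a product of eight conjugates of $g$. There is essentially no obstacle here: the corollary is a pure specialisation, and the only point requiring a moment's thought is the observation that ``stationary'' implies ``local stationary'' and that Lemma~\ref{L:kechris_rosendal} supplies the dense conjugacy class automatically in that setting.
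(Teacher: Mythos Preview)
Your proposal is correct and matches the paper's approach exactly: the corollary is stated with a \qed and no further argument, since it follows immediately from Theorem~\ref{T:main} once Lemma~\ref{L:kechris_rosendal} supplies the dense conjugacy class (and a stationary independence relation is trivially a local one). There is nothing to add.
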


The following example shows that in Theorem~\ref{T:main} the
assumption that $\G$ contains a dense conjugacy class cannot be
dispensed with:

\begin{example}[Cherlin]\upshape
Let $\C$ be the class of finite bipartite graphs in the language
containing a binary relation presenting the edges and an equivalence
relation with two classes presenting the bipartition. Then $\C$ has
local stationary amalgamation, but for the \Frl $\M$ the automorphism
group $\G$ contains no dense conjugacy class: the normal subgroup $N$
of $\G$ consisting of the automorphisms preserving the equivalence
classes is open. It is the automorphism group of an expansion of $\M$
by a predicate denoting one of the conjugacy classes. In this language
$\C$ has stationary amalgamation. If $g\in \G$ moves maximally and
preserves the equivalence classes, it is an automorphism of this
expanded structure. By Corollary~\ref{C:main}, every element of $N$ is
the product of eight conjugates of $g$.  On the other hand, if
$g\in\G$ does not preserve the equivalence classes, then any
nontrivial commutator $[g,h]$ lies in $N$, showing that $\langle
g\rangle^\G=\G$.
\end{example}

 In Section~\ref{Sec:ury} we
 will show that any isometry of
the Urysohn space with unbounded displacement moves maximally
(Proposition \ref{P:max}) and apply Theorem \ref{T:main} to prove

\begin{theorem}\label{T:urysohn}
  For any unbounded isometry $g$ of the Urysohn space the normal
  subgroup $\langle g\rangle^G$ is all of $G$. In fact, any element of
  $G$ is the product of eight conjugates of $g$. Hence $G/B$
  is a simple group.
\end{theorem}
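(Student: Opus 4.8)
The plan is to deduce Theorem~\ref{T:urysohn} from Theorem~\ref{T:main} together with the geometric analysis carried out in Section~\ref{Sec:ury}. First I would invoke Proposition~\ref{P:max}, which tells us that any isometry $g$ of $\U$ with unbounded displacement moves maximally with respect to the local stationary independence relation on $\Q\U$ described in Example~\ref{examples}(1a). Combined with Corollary~\ref{C:Urydense}, which furnishes a dense conjugacy class in the isometry group, the hypotheses of Theorem~\ref{T:main} are met, and we conclude directly that every element of $G$ is a product of eight conjugates of $g$; in particular $\langle g\rangle^G = G$. A small point that needs care here is the passage between the rational Urysohn space $\Q\U$, on which the combinatorial independence relation lives and to which Theorem~\ref{T:main} literally applies, and the complete space $\U$: one uses that $G = \mathrm{Isom}(\U)$ is the completion-induced group, that $\mathrm{Isom}(\Q\U)$ embeds densely, and that the product-of-eight-conjugates conclusion, being a closed condition, transfers to all of $G$. (Alternatively, Section~\ref{Sec:ury} may already be set up so that the independence relation and the notion of moving maximally are phrased for $\U$ directly.)

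Having established that $\langle g\rangle^G = G$ for every unbounded $g$, the simplicity of $G/B$ follows formally. Let $\pi\colon G \to G/B$ be the quotient map and let $1 \ne \bar h \in G/B$ be arbitrary; pick $h \in G$ with $\pi(h) = \bar h$. Since $\bar h \ne 1$, $h$ does not lie in $B$, i.e.\ $h$ has unbounded displacement, so $h$ moves maximally and $\langle h\rangle^G = G$ by the first part. Applying $\pi$, the normal closure of $\bar h$ in $G/B$ is all of $G/B$. As $\bar h$ was an arbitrary nontrivial element, $G/B$ has no proper nontrivial normal subgroup, i.e.\ it is simple. One should also remark that $G/B$ is nontrivial, equivalently $B \ne G$: this is clear since $\U$ admits isometries of unbounded displacement (for instance, one can move a fixed point arbitrarily far, iterating along a ray of points with growing mutual distances), so $B$ is a proper subgroup.

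The genuinely substantive input is of course Proposition~\ref{P:max}, proved in Section~\ref{Sec:ury}: the claim that unbounded displacement forces $g$ to move every type of every finite tuple maximally. I expect the proof there to run roughly as follows. Given a finite set $X \subseteq \U$ and a $1$-type $p$ over $X$ — equivalently, given a consistent assignment of distances $d(x, \cdot)$ for $x \in X$ realisable by some point — one must find a realisation $a$ of $p$ such that $a$ is independent from $g(a)$ over $X; g(X)$, which unwinds to: there exist witnesses realising the relevant metric betweenness conditions among $a$, $g(a)$, $X$, $g(X)$. Since $g$ has unbounded displacement, one can first choose (using density/homogeneity of $\U$) a realisation $a$ of $p$ whose image $g(a)$ lies at distance far exceeding $\diam(X \cup g(X))$ from everything in $X \cup g(X)$; then the triangle inequality makes the required minima in the definition of the canonical amalgam automatically achieved, witnessed by points of $X$ and $g(X)$ respectively. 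Making ``far enough'' precise and checking it is compatible with $g$ actually being the given isometry (rather than one we get to choose) — that is, exploiting unbounded displacement to locate such an $a$ in the domain — is the main obstacle, and is presumably exactly what the detailed analysis of unbounded isometries in Section~\ref{Sec:ury} is designed to handle, likely via a careful case distinction on whether $g$ moves points along an ``axis'' or disperses them.
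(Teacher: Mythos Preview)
Your overall structure is right---invoke Proposition~\ref{P:max}, supply the dense conjugacy class via Corollary~\ref{C:Urydense}, feed both into Theorem~\ref{T:main}, and then read off simplicity of $G/B$ formally---and the last part of your argument is fine. The genuine gap is exactly the point you flag as ``small'': the passage from the countable setting, where Theorem~\ref{T:main} applies, to the complete Urysohn space. Neither of your proposed fixes works. An arbitrary isometry $g$ of $\U$ need not preserve $\Q\U$: there is no reason for $g$ to send points at rational distances to points at rational distances, so $g$ does not restrict to an automorphism of $\Q\U$, and results about $\aut(\Q\U)$ say nothing about the given $g$. Your alternative---working with the independence relation directly on $\U$---is indeed how Section~\ref{Sec:ury} is set up, but Theorem~\ref{T:main} requires $\M$ to be \emph{countable}, so it cannot be applied to $\U$ itself. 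Nor is ``product of eight conjugates of $g$'' a closed condition in any useful sense: it is the image of a continuous map from $G^8$, hence analytic, but not obviously closed.

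The paper resolves this with a L\"owenheim--Skolem argument. Given $f,g\in G$, view $(\U,f,g,d,\R,+,<)$ as a two-sorted structure and take a countable elementary substructure $(\U',f',g',d',R,+,<)$ containing a countable dense subset of $\U$. Then $R$ is a countable subgroup of $\R$ and $\U'$ is (isometric to) the $R$-valued Urysohn space $R\U$; moreover $g'$ still moves maximally, since this is expressible in the two-sorted structure. Now Theorem~\ref{T:main} applies to the countable structure $R\U$ and gives $h'_1,\ldots,h'_8\in\aut(R\U)$ with $f'=(g')^{h'_1}\cdots(g')^{h'_8}$. Finally, density of $\U'$ in $\U$ lets you extend each $h'_i$ to an isometry $h_i$ of $\U$, and the identity $f=g^{h_1}\cdots g^{h_8}$ holds on a dense set, hence everywhere. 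The key point you are missing is that the countable substructure must be tailored to the particular $f$ and $g$; there is no single countable skeleton (like $\Q\U$) that works uniformly.
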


We have not been able to establish the simplicity of the isometry group
of the bounded Urysohn space with our methods.

\bigskip
The proof  of Theorem \ref{T:main} follows the general strategy of \cite{Lascar1992} and \cite{MT}, using ideas of descriptive set theory. The main technical result
is the following proposition, whose
proof will be given in Section~\ref{Sec:proof}:
\begin{proposition}\label{P:dense}
Under the assumptions and notations of Theorem \ref{T:main},
let \[\phi:\G^4\to\G, \phi:(h_1,\ldots h_4)\mapsto g^{h_1}\ldots g^{h_4}.\]
  Then for any
  open set $U\subseteq\G^4$ there is some open set $W\subseteq\G$ with
  $\phi(U)$ dense in $W$.  Equivalently, for any nowhere dense set $X$
  in $\G$, its preimage $\phi\inv(X)$ is nowhere dense in $\G^4$
\end{proposition}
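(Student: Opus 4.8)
The plan is to convert the topological statement into a concrete amalgamation problem and then solve that problem by threading the target partial automorphism through the four conjugates of $g$, using that $g$ moves maximally to create the room we need and Stationarity to keep the construction coherent. The ``equivalently'' clause is a soft reformulation which I would settle first: $\phi$ is continuous, so $\phi\inv(X)$ is closed whenever $X$ is closed, and the two formulations imply one another. Indeed, if the density version failed for some nonempty open $U$ then $X=\overline{\phi(U)}$ would be nowhere dense while $\phi\inv(X)\supseteq U$ is not; conversely, if $X$ is nowhere dense and some nonempty open $U\subseteq\phi\inv(\overline X)$, then by the density version some nonempty open $W\subseteq\overline{\phi(U)}\subseteq\overline X$, which is absurd. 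So it suffices to show that every nonempty open $U$ has $\phi(U)$ dense in some nonempty open $W$, and we may assume $U=N_{s_1}\times\cdots\times N_{s_4}$, where $N_s$ denotes the basic open set of automorphisms extending a finite partial isomorphism $s$ of $\M$.

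Fix arbitrary $h_i^0\in N_{s_i}$, put $f_0=\phi(h_1^0,\dots,h_4^0)=g^{h_1^0}\cdots g^{h_4^0}$, choose a finite set $D_0\subseteq\M$ containing the domains and images of the $s_i$, and enlarge each $s_i$ to a finite restriction $s_i^0$ with $s_i\subseteq s_i^0\subseteq h_i^0$ chosen so large that the value of $g^{h_1^0}\cdots g^{h_4^0}$ on $D_0$ is already forced by $g$ together with $s_1^0,\dots,s_4^0$. Put $W=N_{f_0\restriction D_0}$. Since a basic open subset of $W$ is of the form $N_q$ for a finite partial isomorphism $q\supseteq f_0\restriction D_0$, and the ``forced'' condition makes any $\bar h\in N_{s_1^0}\times\cdots\times N_{s_4^0}$ automatically compatible with $f_0\restriction D_0$, the proposition reduces to the following \emph{Extension Claim}: for every finite partial isomorphism $q\supseteq f_0\restriction D_0$ there are $h_i\in N_{s_i^0}$ with $g^{h_1}\cdots g^{h_4}\supseteq q$.

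To prove the Extension Claim I would route $q$ through the four factors. After harmlessly enlarging $\dom q$, decompose $q=r_1r_2r_3r_4$ as a composite of finite partial isomorphisms through auxiliary finite sets, $\dom q\to C_1\to C_2\to C_3\to\im q$, and build the $h_i$ together with the $C_j$ stage by stage. At stage $i$ the task is to realise the prescribed partial map $r_i$ as a restriction of $g^{h_i}$ with $h_i\in N_{s_i^0}$; unwinding the conjugation, this means producing a finite $Z_i\subseteq\M$ together with an isomorphism (a restriction of $h_i\inv$) carrying the current source of $r_i$ onto $Z_i$, its target onto $g(Z_i)$, and extending $(s_i^0)\inv$ suitably. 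This is exactly where the hypothesis that \textbf{$g$ moves maximally} enters: applied to the type of $Z_i$ over the nonempty base extracted from $s_i^0$, it supplies such a $Z_i$ with $Z_i$ independent from $g(Z_i)$ over that base; the still-unchosen points of the $C_j$ are then produced by \textbf{Existence}, realising the type this forces over the data already fixed, and \textbf{Stationarity} guarantees that all these choices fit together into a single type-preserving partial map, which extends to an automorphism $h_i$ of $\M$. The dense conjugacy class — and, in the purely local case, the extra hypothesis of Lemma~\ref{L:kechris_rosendal} — is what allows us to always work over a nonempty base (for instance by first adjoining a point placed generically far from all the relevant data) and to take the base tuple $(h_1^0,\dots,h_4^0)$ generic, which makes the decomposition $q=r_1r_2r_3r_4$ available whatever $q$ is.

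The main obstacle — and the reason the product of several conjugates, not a single one, is needed — is the simultaneous bookkeeping: each element of $\dom q$ must be traced through all four conjugates, and the intermediate images of different source elements must be chosen so as to realise a single type over the already-fixed data, so that one automorphism $h_i$ can handle all of stage $i$ at once; at the same time the independence demanded at each stage must be set up over a nonempty base that does not collide with the frozen constraints $s_i^0$. Arranging all of this consistently, for every target $q$, is the technical heart of the argument.
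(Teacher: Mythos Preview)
Your reduction is sound up through the Extension Claim: the equivalence of the two formulations, the passage to a basic open box $N_{s_1}\times\cdots\times N_{s_4}$, the choice of a base tuple $(h_i^0)$, the enlargement of the $s_i$ so that the product is already determined on a finite set $D_0$, and the definition of $W$ are all exactly as in the paper. The Extension Claim is also the right target.

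The gap is in your sketch of the Extension Claim. You treat the four stages symmetrically (``at stage $i$ the task is\ldots''), invoking ``$g$ moves maximally'' uniformly at each stage to find a suitable $Z_i$. This does not work, and it obscures what the four factors are actually doing. The paper's argument has two distinct phases which you have collapsed into one. First (Proposition~\ref{P:zurichtung}) one replaces the frozen data $s_i^0$ by slightly larger finite sets $Y_0,\dots,Y_4$, still with $g^{h_i}(Y_{i-1})=Y_i$ for suitable conjugators, arranged so that the \emph{base sets themselves} satisfy $\indep{Y_0}{Y_1}{Y_2}$ and $\indep{Y_2}{Y_3}{Y_4}$. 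Only then (Proposition~\ref{P:vier}) can one route an arbitrary $q$: the first and last factors use these base independences (via Lemma~\ref{L:zug}) to push $x_0$ forward and $x_4$ backward to elements $x_1,x_3$ independent from $Y_2$; the middle two factors then use maximality directly (via Lemma~\ref{L:neu}) to link $x_1$ and $x_3$ through a freely chosen midpoint $x_2$. The roles of the outer and inner factors are different, and the preparatory independences among the $Y_i$ are what make the outer steps possible for \emph{every} $x_0,x_4$. Your proposal never isolates these base independences, so ``Stationarity guarantees that all these choices fit together'' is a hope rather than an argument.

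One further correction: the dense conjugacy class hypothesis plays no role in the proof of this proposition. It is used only afterwards, in deducing Theorem~\ref{T:main}, to pass from ``$\phi(\G^4)$ is non-meagre'' to ``$\phi(\G^4)$ is comeagre''. In particular it is not what lets you work over a nonempty base; that comes for free once the $s_i$ are nonempty, since the sets $Y_i$ are built to contain $\im(s_i)$.
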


\noindent
\emph{Proof of Theorem \ref{T:main} from
  Proposition~\ref{P:dense}:}
\noindent By Proposition~\ref{P:dense} the image of $\phi$ is not
meagre, for if $\phi(\G^4)=\bigcup X_i$ with $X_i$ nowhere dense, we
would have $\G^4=\bigcup\phi\inv(X_i)$ contradicting the Baire
Category Theorem.  Note that as the image under an analytic map, the set
$\phi(\G^4)$ has the Baire property and is invariant under conjugation.
 Since by assumption there is a dense conjugacy
class, we conclude
from \cite[Theorem 8.46]{Kechris} (applied to $\G$ acting on
itself by conjugation) that $\phi(\G^4)$ is comeagre. Since $g\inv$
moves maximally as well, the image of $\phi':(h_1,\ldots
h_4)\mapsto(g^{-h_1}\ldots g^{-h_4})$ is also comeagre. So for any
$f\in\G$ the translate $\phi'(\G^4)f$ intersects $\phi(\G^4)$, which
is the claim of Theorem~\ref{T:main}.

\section{Proof of Proposition \ref{P:dense}}\label{Sec:proof}

We continue to work with the countable structure $\M$ with a local
stationary independence relation. $\G$ is the automorphism group of
$\M$.  We write $\Fix(X)$ for the pointwise stabiliser of the set $X$
in $\G$.\\

We start with a simple lemma.

\begin{lemma}\label{L:tool}
  \indent\par
  \begin{enumerate}
  \item\label{tool_triv}\sloppy If $\indep{A}{B}{C}$ and $D$ is arbitrary,
    then there is some $D'$ such that $\tp(D'/BC)=\tp(D/BC)$ and
    $\indep{A}{B}{CD'}$
  \item\label{tool_neu} Let $\indep{A}{B}{C}$ and
    $g_1,\ldots,g_n\in\G$. Then there is $e\in \Fix(BC)$ with
    $\indep{A}{B}{Cg_1^e(C)\ldots g_n^e(C)}$.
  \end{enumerate}
\end{lemma}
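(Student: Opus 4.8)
The plan is to deduce part~(\ref{tool_neu}) from part~(\ref{tool_triv}), and to prove part~(\ref{tool_triv}) using only Existence, Symmetry and Transitivity. Before starting I would record the ``concatenated'' form of Transitivity that the axioms already provide:
\[\indep{A}{B}{C}\ \text{ and }\ \indep{A}{BC}{D}\ \Longrightarrow\ \indep{A}{B}{CD}.\]
This follows by invoking the Transitivity axiom with $CD$ in the role of its ``$D$'': since $C\subseteq BC$, the equivalence $\indep{X}{Y}{Z}\Leftrightarrow\indep{X}{Y}{YZ}$ gives $\indep{A}{BC}{CD}\Leftrightarrow\indep{A}{BC}{BCD}\Leftrightarrow\indep{A}{BC}{D}$, so the hypotheses of Transitivity are met and its conclusion is exactly $\indep{A}{B}{CD}$. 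It is essential here not to appeal to base monotonicity, which is deliberately excluded from the axiom list.

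For part~(\ref{tool_triv}): assume $\indep{A}{B}{C}$, so $B\neq\emptyset$ and $BC$ is a legitimate base in the local case. Apply Existence to the type $\tp(D/BC)$ and the finite set $A$ to obtain a realisation $D'$ of $\tp(D/BC)$ with $\indep{D'}{BC}{A}$; by Symmetry this is $\indep{A}{BC}{D'}$. Feeding $\indep{A}{B}{C}$ and $\indep{A}{BC}{D'}$ into the concatenated Transitivity yields $\indep{A}{B}{CD'}$, and $\tp(D'/BC)=\tp(D/BC)$ holds by the choice of $D'$.

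For part~(\ref{tool_neu}): set $D=g_1(C)\cup\dots\cup g_n(C)$, which is finite because $C$ is, and fix an enumeration of $D$ as a tuple. By part~(\ref{tool_triv}) there is $D'$ with $\tp(D'/BC)=\tp(D/BC)$ and $\indep{A}{B}{CD'}$. By the definition of type realisation, this equality is witnessed by some $e\in\Fix(BC)$ with $e(D)=D'$. Since $e$ fixes $C$ pointwise, $g_i^e$ maps $C$ onto $e(g_i(C))$ (with the other conjugation convention one uses $e\inv$ and picks $e$ witnessing $D=e(D')$ instead), so
\[C\,g_1^e(C)\cdots g_n^e(C)=C\cup e\bigl(g_1(C)\cup\dots\cup g_n(C)\bigr)=C\cup e(D)=CD',\]
whence $\indep{A}{B}{C\,g_1^e(C)\cdots g_n^e(C)}$, as required.

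The one genuinely substantive step is extracting the concatenated form of Transitivity from the stated axioms while steering clear of base monotonicity; after that, both parts reduce to bookkeeping with finite unions, the definition of type realisation, and the observation that conjugating $g_i$ by an automorphism that pointwise fixes $C$ merely composes $g_i|_C$ with that automorphism, so I do not anticipate a real obstacle.
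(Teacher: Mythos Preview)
Your proof is correct and follows essentially the same approach as the paper's. The paper's argument for part~(\ref{tool_triv}) is the terse ``choose $D'$ of the right type with $\indep{A}{BC}{D'}$ and use Transitivity'', which is exactly what you do after spelling out the concatenated form of Transitivity; for part~(\ref{tool_neu}) the paper likewise applies part~(\ref{tool_triv}) to the tuple $g_1(C),\ldots,g_n(C)$ and then reads off $e\in\Fix(BC)$ from the definition of type, just as you do.
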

\begin{proof}\indent\par\noindent
  \ref{tool_triv}. Choose $D'$ of the right type with
  $\indep{A}{BC}{D'}$ and use Transitivity.\\

  \noindent\ref{tool_neu}. By (\ref{tool_triv}) there are
  $C_1,\ldots,C_n$ with
  \[\tp(C_1,\ldots,C_n/BC)=\tp(g_1(C),\ldots,g_n(C)/BC)\] and
  \[\indep{A}{B}{CC_1\ldots C_n}.\] Choose $e\in \Fix(BC)$ with
  $e(C_i)=g_i(C)$. Then we have $g_i^e(C)=C_i$.
\end{proof}

\begin{proposition}\label{P:zurichtung}
  Consider $g_1,\ldots,g_4\in\G$ and finite sets $X_0,\ldots,X_4$ such
  that $g_i(X_{i-1})=X_i$. Then for $i=1,\ldots 4$ there are $a_i\in \Fix(X_{i-1}X_i)$  and
  extensions $X_i\subset Y_i$ such that
  \begin{enumerate}
  \item $g_i^{a_i}(Y_{i-1})=Y_i$,
  \item $\indep{Y_0}{Y_1}{Y_2}$ and $\indep{Y_2}{Y_3}{Y_4}$.
  \end{enumerate}
\end{proposition}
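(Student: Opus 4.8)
The plan is to build the extensions $Y_i$ one at a time, moving from the "middle" of the chain outward, using Lemma~\ref{L:tool}(\ref{tool_neu}) to force the independence statements we want while conjugating the $g_i$ by elements fixing the relevant $X_{i-1}X_i$ so that the map condition $g_i^{a_i}(Y_{i-1})=Y_i$ is preserved. The key point is that conjugating $g_i$ by an element of $\Fix(X_{i-1}X_i)$ does not change the fact that it maps $X_{i-1}$ to $X_i$, so we have room to adjust.

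\medskip

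\noindent First I would set up the two independence requirements as constraints on a growing chain. Start by taking $Y_2 := X_2$. To arrange $\indep{Y_0}{Y_1}{Y_2}$ I want to first produce $Y_1 \supseteq X_1$ with $\indep{X_1 X_0}{X_1}{X_2}$ — but $X_1 X_0$ is already a set, so really I want to enlarge $X_1$ to some $Y_1$ and then later enlarge $X_0$ to $Y_0$ so that $g_1^{a_1}(Y_0) = Y_1$ while independence holds. The cleanest route: use Lemma~\ref{L:tool}(\ref{tool_neu}) with the trivial starting independence $\indep{X_2}{X_1 X_2}{\emptyset}$ (or a suitable base case obtained from Existence over $X_1$) to find $e \in \Fix(X_1 X_2)$ with $X_2$ independent from $X_1 X_2 g_1^e(\cdots)$ over $X_1$; set $a_1 := e$, so $g_1^{a_1}$ still maps $X_0 \to X_1$, and define $Y_1$ and then $Y_0 := (g_1^{a_1})^{-1}(Y_1)$. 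I would carry this out symmetrically on the other end for $\indep{Y_2}{Y_3}{Y_4}$: pick $Y_4 := X_4$, find $a_4 \in \Fix(X_3 X_4)$ giving the independence of $X_4$ from the appropriate enlargement over $X_3$, define $Y_3$ and $Y_4$, and note $g_4^{a_4}$ still maps $X_3 \to X_4$.

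\medskip

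\noindent The remaining maps $g_2^{a_2}\colon Y_1 \to Y_2$ and $g_3^{a_3}\colon Y_2 \to Y_3$ must then be arranged without disturbing the two independence facts, which are already secured. Here $Y_1$ (constructed from the left) and $Y_3$ (constructed from the right) are fixed, and $Y_2 = X_2$ is fixed. Since $\tp(Y_1/X_1) = \tp(g_1^{a_1}(Y_0)/X_1)$ is determined and similarly on the right, I use homogeneity of $\M$: because $g_2$ maps $X_1 \to X_2$, there is an automorphism $a_2 \in \Fix(X_1 X_2)$ sending $g_2^{a_2}(Y_1)$ to $Y_2$ — one must check $Y_1$ over $X_1$ and $g_2^{-1}(Y_2)$ over $X_1$ realise the same type, which holds since both are copies of an extension of $X_1$ of a prescribed isomorphism type (this is where the freedom in choosing $Y_1$ and $Y_3$ via the canonical amalgam/independence must be matched up). Concretely I would construct $Y_1, Y_2, Y_3$ with compatible types over $X_1, X_2, X_3$ \emph{first}, as abstract extensions, using Existence repeatedly, and only afterwards chase the conjugating automorphisms.

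\medskip

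\noindent The main obstacle I anticipate is the bookkeeping needed to make all four maps compatible simultaneously: each $a_i$ must fix $X_{i-1}X_i$, and the $Y_i$ chosen for the left independence must be the \emph{same} sets as those entering the maps $g_2^{a_2}, g_3^{a_3}$, so the enlargements have to be chosen in a single coherent pass rather than independently. The way I would tame this is to fix, once and for all, a target isomorphism type for the chain $Y_0 \subseteq$-related-to$\cdots Y_4$ as an abstract finite structure (using Existence and Stationarity to know such a type exists and is unique given the independence constraints), realise it inside $\M$, and then invoke homogeneity four times — once per $i$ — to get each $a_i$, checking at each step that the hypotheses $g_i(X_{i-1}) = X_i$ plus the matching of types over $X_{i-1}$ make the required automorphism exist in $\Fix(X_{i-1}X_i)$. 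The local nature of the independence relation (no independence over $\emptyset$) is handled because every independence statement we assert, $\indep{Y_0}{Y_1}{Y_2}$ and $\indep{Y_2}{Y_3}{Y_4}$, has a nonempty base.
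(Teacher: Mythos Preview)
Your plan contains a genuine gap. You start by fixing $Y_2:=X_2$ and $Y_4:=X_4$ while enlarging $Y_1$ and $Y_3$, but the $g_i^{a_i}$ are automorphisms, hence bijections: if $Y_1\supsetneq X_1$ then $|Y_1|>|X_1|=|X_2|=|Y_2|$ and no automorphism can carry $Y_1$ onto $Y_2$. So the condition $g_2^{a_2}(Y_1)=Y_2$ is already impossible, and the ``type-matching'' check you propose (comparing $\tp(Y_1/X_1)$ with $\tp(g_2^{-1}(Y_2)/X_1)=\tp(X_1/X_1)$) fails for the same cardinality reason. A second, related problem is the input to Lemma~\ref{L:tool}(\ref{tool_neu}): you feed it $\indep{X_2}{X_1X_2}{\emptyset}$, which is over the empty base and thus not available in the local setting; but more to the point, there is no reason for any nontrivial instance like $\indep{X_0}{X_1}{X_2}$ to hold at the outset---that is precisely what the proposition has to manufacture.

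The idea you are missing, and which the paper uses, is to \emph{enlarge the base first}: set $X'_1=X_0\cup\cdots\cup X_4$, so that $\indep{X_0}{X'_1}{X_2X_3X_4}$ holds for free (both outer sets lie inside the base). Now Lemma~\ref{L:tool}(\ref{tool_neu}) genuinely applies: conjugating $g_2,g_3,g_4$ by a single $e\in\Fix(X'_1X_2X_3X_4)$ pushes $X'_1$ forward to $X'_2,X'_3,X'_4$ with $\indep{X_0}{X'_1}{X'_2X'_3X'_4}$, and a separate conjugation of $g_1$ produces $X'_0$. One then repeats the whole manoeuvre on the reversed chain $X'_4,\ldots,X'_1$ to obtain the second independence and the final $Y_i$. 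Your fallback suggestion of fixing an abstract chain type and realising it wholesale does not avoid this: the $X_i$ are already concrete subsets of $\M$ with fixed mutual types, and you need a mechanism to force the independences to appear, which is exactly what the base-enlargement trick provides.
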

\begin{proof}
  Step 1. Choose a finite extension $X'_1$ of $X_1$ such that
  $\indep{X_0}{X'_1}{X_2X_3X_4}$, for example $X'_1=X_0\cup\ldots\cup
    X_4$.\\

  \noindent Step 2. Apply \ref{L:tool}(\ref{tool_neu}) to $A=X_0$,
  $B=X'_1$, $C=X'_1X_2X_3X_4$ and the automorphisms $g_2$, $g_3g_2$
  and $g_4g_3g_2$. We obtain $e\in \Fix(X'_1X_2X_3X_4)$ such that
  taking \[X'_2=g_2^e(X'_1),X'_3=g_3^e(X'_2),X'_4=g_4^e(X'_3),\] we
  have \[\indep{X_0}{X'_1}{X'_2X_3X'_4}.\]

  \noindent Step 3. The same argument as in Step 2 yields $f\in
  \Fix(X_0X'_1)$ such that taking \[X'_0=(g_1^f)\inv(X'_1)\] we have
  \[\indep{X'_0}{X'_1}{X'_2X'_3X'_4}.\] Set $h_1=g_1^f$ and $h_i=g_i^e$
  for $i=2,3,4$.\\

  \noindent Step 4. If we apply what we proved so far to the reversed
  sequence $X'_4,X'_3,X'_2,X'_1$, we obtain $b_2,b_3,b_4$ with $b_i\in
  \Fix(X'_{i-1}X'_i)$ and extensions $X'_i\subset Y_i$ for
  $i=1,\ldots,4$ such that \[h_i^{b_i}(Y_{i-1})=Y_i {\mbox {\rm
      \ and} \ \ } \indep{Y_1Y_2}{Y_3}{Y_4}.\]

  \noindent Step 5. Lemma \ref{L:tool}(\ref{tool_triv}) shows that we
  may assume that \[\indep{X'_0}{X'_1}{Y_1Y_2Y_3Y_4}.\] By Monotonicity we
  conclude \[\indep{X'_0}{Y_1}{Y_2Y_3Y_4}.\]

  \noindent Step 6. As in Step 3 we find some $b_1\in
  \Fix(X'_0Y_1)$ such that with \[Y_0=(h_1^{b_1})\inv(Y_1)\] we
  have \[\indep{Y_0}{Y_1}{Y_2Y_3Y_4}.\]
\end{proof}
\begin{remark}\upshape
  In fact, the proof yields slightly more: we
  have \[\indep{Y_1Y_2}{Y_3}{Y_4} \mbox{ \ and
    \ \ }\indep{Y_0}{Y_1}{Y_2Y_3Y_4}\] which together
  imply \[\indep{Y_0Y_1Y_2}{Y_3}{Y_4}.\] Note also that we may choose
  $a_2=a_3$.
\end{remark}
\begin{proposition}\label{P:vier}
  Let $g_1,\ldots,g_4\in\G$ move maximally and let $Y_0,\ldots,Y_4$ be
  finite sets such that $g_i(Y_{i-1})=Y_i$ for $i=1,\ldots 4$. Assume
  also that $\indep{Y_0}{Y_1}{Y_2}$ and $\indep{Y_2}{Y_3}{Y_4}$. Let
  $x_0$ and $x_4$ be two tuples such that $g_4g_3g_2g_1$ maps
  $\tp(x_0/Y_0)$ to $\tp(x_4/Y_4)$. Then for $i=1,\ldots 4$, there are
  $a_i\in \Fix(Y_{i-1}Y_i)$ such that
  \[g_4^{a_4}\ldots g_1^{a_1}(x_0)=x_4.\]
\end{proposition}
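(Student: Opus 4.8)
The plan is to build the conjugators $a_i$ one step at a time, using the maximal-movement hypothesis together with stationarity to control how $x_0$ is transported along the chain $Y_0 \to Y_1 \to Y_2 \to Y_3 \to Y_4$. First I would set $x_0' = x_0$ and, for $i=1,\ldots,4$, inductively produce a tuple $x_i$ together with $a_i \in \Fix(Y_{i-1}Y_i)$ so that $g_i^{a_i}(x_{i-1}) = x_i$ and so that at each stage the newly obtained $x_i$ is placed independently over $Y_i$ from the part of the configuration still to come. Concretely, given $x_{i-1}$ with $\tp(x_{i-1}/Y_{i-1})$ already fixed, I apply $g_i$ to get $g_i(x_{i-1})$ with $\tp(g_i(x_{i-1})/Y_i) = g_i(\tp(x_{i-1}/Y_{i-1}))$; since $g_i$ moves maximally, I may first conjugate $g_i$ by some $a_i \in \Fix(Y_{i-1}Y_i)$ so that the resulting image $x_i := g_i^{a_i}(x_{i-1})$ realises the \emph{right} type over $Y_i$ and is moreover independent over $Y_i$ from $Y_{i+1}\cdots Y_4$ (using Lemma~\ref{L:tool}(\ref{tool_triv}) or (\ref{tool_neu}) to absorb the later sets).

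The key point is to check that the type $\tp(x_i/Y_i)$ produced this way is forced to be the ``correct'' one, so that after four steps $x_4$ has the prescribed type $\tp(x_4/Y_4)$ and the whole map $g_4^{a_4}\cdots g_1^{a_1}$ sends $x_0$ to $x_4$. Here is where the independence hypotheses $\indep{Y_0}{Y_1}{Y_2}$ and $\indep{Y_2}{Y_3}{Y_4}$ enter. By the hypothesis that $g_4g_3g_2g_1$ maps $\tp(x_0/Y_0)$ to $\tp(x_4/Y_4)$, we know the target type over $Y_4$; we need to see that it is determined by the data at hand. The strategy is: at step $1$ choose $a_1$ so that $x_1 := g_1^{a_1}(x_0)$ is independent from $Y_2Y_3Y_4$ over $Y_1$; then $\tp(x_1 Y_0/Y_1)$ is determined (it equals $g_1(\tp(x_0/Y_0))$ lifted) and, because of $\indep{Y_0}{Y_1}{Y_2}$ together with $\indep{x_1}{Y_1}{Y_2 Y_3 Y_4}$, the type $\tp(x_1/Y_1 Y_2)$ — indeed over $Y_1 Y_2 Y_3 Y_4$ — is pinned down by Stationarity. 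Iterating, at step $3$ one uses $\indep{Y_2}{Y_3}{Y_4}$ in the same way to transport the type past $Y_3$. One should keep track of the type of $x_i$ over the accumulated set $Y_0\cdots Y_i$ rather than merely over $Y_i$, so that Transitivity of the independence relation (Lemma~\ref{L:ind}) can be applied to glue the two halves of the chain together at $Y_2$.

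The main obstacle I anticipate is the bookkeeping needed to verify that Stationarity genuinely applies at each transport step: Stationarity requires two realisations of the \emph{same} type over the base, both independent from the set being added, and one must argue that the $x_i$ constructed by the inductive recipe and the ``intended'' image (a realisation of the type coming from $g_4g_3g_2g_1$) are exactly such a pair. This forces a careful choice of what base set to work over at each stage and an honest check that the independence $\indep{x_i}{Y_i}{Y_{i+1}\cdots Y_4}$ can be arranged \emph{simultaneously} with $x_i = g_i^{a_i}(x_{i-1})$ for a genuine $a_i \in \Fix(Y_{i-1}Y_i)$ — this is precisely what maximal movement of $g_i$ buys us, since moving $x_{i-1}$ maximally means $x_{i-1}$ is independent from $g_i(x_{i-1})$ over $Y_{i-1}; Y_i$, and Lemma~\ref{L:tool}(\ref{tool_triv}) then lets us enlarge the side $Y_i$ to $Y_i Y_{i+1}\cdots Y_4$ while keeping the type of $x_{i-1}$ over $Y_{i-1}$ fixed. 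Once these independence arrangements are in place, the final identity $g_4^{a_4}\cdots g_1^{a_1}(x_0) = x_4$ is immediate from the definition of the $x_i$, and matching $\tp(x_4/Y_4)$ with the prescribed type follows from the uniqueness of the transported type.
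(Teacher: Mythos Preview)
Your forward-only plan has a genuine gap: it can at best produce an image of $x_0$ realising the correct \emph{type} over $Y_4$, but the proposition asks you to hit the \emph{specific} tuple $x_4$, about whose position relative to $Y_3Y_4$ you are told nothing. In your last paragraph you write that ``the final identity $g_4^{a_4}\cdots g_1^{a_1}(x_0)=x_4$ is immediate from the definition of the $x_i$'', but your $x_4$ is \emph{constructed} as $g_4^{a_4}(x_3)$, whereas the given $x_4$ is arbitrary among realisations of its type over $Y_4$. To force $g_4^{a_4}(x_3)=x_4$ for some $a_4\in\Fix(Y_3Y_4)$ you would need (via Lemma~\ref{L:neu}) that $x_3$ is independent from $x_4$ over $Y_3;Y_4$, and in particular that $\indep{Y_3}{Y_4}{x_4}$ --- which is simply not given. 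Stationarity tells you two independent realisations have the same type; it does not tell you they are equal.

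There is also a structural mismatch with the hypotheses. You want at step $i$ to arrange $\indep{x_i}{Y_i}{Y_{i+1}\cdots Y_4}$; to get this from maximal movement of $g_i$ (essentially Lemma~\ref{L:zug}) you would need $\indep{Y_{i-1}}{Y_i}{Y_{i+1}\cdots Y_4}$. But the assumptions give only $\indep{Y_0}{Y_1}{Y_2}$ and $\indep{Y_2}{Y_3}{Y_4}$; there is no $\indep{Y_1}{Y_2}{Y_3}$ to carry you forward across the middle. The paper's proof resolves both problems at once by working from the two ends toward $Y_2$: apply Lemma~\ref{L:zug} to $g_1$ (using $\indep{Y_0}{Y_1}{Y_2}$) to get $x_1$ with $\indep{x_1}{Y_1}{Y_2}$, and to $g_4^{-1}$ (using $\indep{Y_4}{Y_3}{Y_2}$) starting from the \emph{given} $x_4$ to get $x_3$ with $\indep{Y_2}{Y_3}{x_3}$. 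Then choose a single $x_2$ of the right type over $Y_2$ independent from $x_1Y_1x_3Y_3$, and use Lemma~\ref{L:neu} twice to connect $x_1\to x_2$ and $x_2\to x_3$. The backward half is what makes the specific $x_4$ reachable.
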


For the proof we need two lemmas:

\begin{lemma}\label{L:zug}
  Let $g\in\G$ move maximally, let $X,Y,C$ be finite sets such that
  $g(X)=Y$ and $\indep{X}{Y}{C}$ and let $x$ be a tuple. Then there is some
  $a\in\Fix(XY)$  such that
  \[\indep{g^a(x)}{Y}{C}.\]
\end{lemma}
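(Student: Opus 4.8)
The plan is to pass from the assertion about conjugates of $g$ to a statement purely about the independence relation, and then obtain that statement in one step from the fact that $g$ moves maximally, applied to a well-chosen type.

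First I would dispose of the conjugation bookkeeping (with the convention $g^a=aga\inv$). For $a\in\Fix(XY)$ put $x_1=a\inv(x)$ and $C_1=a\inv(C)$; since $a\inv$ fixes $XY$ pointwise, $(x_1,C_1)$ realises $\tp(xC/XY)$, and conversely every realisation of $\tp(xC/XY)$ arises this way for a suitable $a\in\Fix(XY)$. Moreover $g^a(x)=a(g(x_1))$, and since $a$ fixes $Y$ pointwise, applying $a\inv$ and Invariance give $\indep{g^a(x)}{Y}{C}\Leftrightarrow\indep{g(x_1)}{Y}{C_1}$. Hence it suffices to find some $(x_1,C_1)\models\tp(xC/XY)$ with $\indep{g(x_1)}{Y}{C_1}$.

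To do this I would apply the hypothesis that $g$ moves maximally to the non-empty finite set $Z=XY$ (it is non-empty since $\indep{X}{Y}{C}$ presupposes $Y\neq\emptyset$) and to the type $p=\tp(xC/XY)$ of the tuple obtained by concatenating $x$ with an enumeration of $C$. This yields a realisation $(x_1,C_1)$ of $p$ that is independent from $g(x_1C_1)=g(x_1)g(C_1)$ over $Z;g(Z)$, where $g(Z)=g(X)\cup g(Y)=Y\cup g(Y)$ because $g(X)=Y$. Unwinding the definition of ``independent over $Z;g(Z)$'' and using Monotonicity and Symmetry, I read off
\[\indep{C_1}{XY}{g(Y)}\qquad\text{and}\qquad\indep{g(x_1)}{Yg(Y)}{C_1}.\]
Since $(x_1,C_1)\models\tp(xC/XY)$, Invariance turns the hypothesis $\indep{X}{Y}{C}$ into $\indep{C_1}{Y}{X}$. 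Now Transitivity applied to $\indep{C_1}{Y}{X}$ and $\indep{C_1}{XY}{g(Y)}$ gives $\indep{C_1}{Y}{g(Y)}$, and a second application of Transitivity to $\indep{C_1}{Y}{g(Y)}$ and $\indep{C_1}{Yg(Y)}{g(x_1)}$ gives $\indep{C_1}{Y}{g(x_1)}$, that is $\indep{g(x_1)}{Y}{C_1}$, as required.

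I do not expect a serious obstacle here. The one step needing care is reading the two displayed independences off the ``moves maximally'' data, where one must track which set is the base and use $g(X)=Y$ to identify $g(Z)$ with $Y\cup g(Y)$; after that it is routine manipulation of the axioms. The conceptual point is that applying ``moves maximally'' to the joint type of $x$ and $C$ (rather than to $x$ alone) already places $g(x_1)$ independently from $C_1$ once the harmless excess base $g(Y)$ is discarded, and it is precisely the hypothesis $\indep{X}{Y}{C}$ that lets us discard it.
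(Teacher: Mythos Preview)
Your argument is correct and takes a genuinely different route from the paper. The paper applies ``moves maximally'' only to $\tp(x/XY)$: it first conjugates $g$ so that $g^{a_1}$ moves $x$ itself maximally (yielding $\indep{x}{XY}{g^{a_1}(x)}$), then invokes Existence to choose a realisation $y$ of $\tp(g^{a_1}(x)/XYx)$ with $\indep{y}{xXY}{C}$, derives $\indep{y}{Y}{C}$ from $\indep{X}{Y}{C}$ via Transitivity, and finally conjugates a second time (by some $a_2\in\Fix(xXY)$) to send $g^{a_1}(x)$ to $y$. Your approach instead applies ``moves maximally'' to the \emph{joint} type $\tp(xC/XY)$, which in one stroke produces both a copy $x_1$ of $x$ and a copy $C_1$ of $C$ with $g(x_1)$ already independent from $C_1$ over $Yg(Y)$; then the hypothesis $\indep{X}{Y}{C}$, transported to $C_1$ by Invariance, lets you drop $g(Y)$ from the base via two applications of Transitivity. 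Your version is more economical --- one use of ``moves maximally'', no separate Existence step, and a single conjugating element --- at the cost of having to track the extra set $g(Y)$; the paper's version keeps the two tasks (moving $x$ maximally; placing the image independently from $C$) cleanly separated.
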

\begin{proof}
  Let $x'$ be a realisation of $\tp(x/XY)$ moved maximally by $g$ and
  let $a_1\in \Fix(XY)$ be such that $a_1(x')=x$. Then $g^{a_1}$ moves
  $x$ maximally over $XY$. So we have
  \[\indep{x}{XY}{g^{a_1}(x)}.\]

  Now let $y$ be a realisation of $\tp(g^{a_1}(x)/XYx)$
  with \[\indep{y}{xXY}{C}.\] We have then also $\indep{x}{XY}{y}$. By
  Transitivity, Symmetry and the assumption $\indep{X}{Y}{C}$ we
  conclude
  \[\indep{y}{Y}{C}.\]
  Finally choose $a_2\in \Fix(xXY)$ with $a_2g^{a_1}(x)=
  g^{a_1a_2\inv}(x)=y$.
\end{proof}
\begin{lemma}\label{L:neu}
  Let $g\in\G$ move maximally and let $X,Y$ be finite sets with
  $g(X)=Y$. Assume that $x$ and $y$ are tuples with $x$ independent
  from $y$ over $X; Y$ and such that $g(\tp(x/X))=\tp(y/Y)$.  Then
  there is some $a\in \Fix(XY)$ such that
 \[g^a(x)=y.\]
\end{lemma}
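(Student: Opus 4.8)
The plan is to reduce to a single application of stationarity. First I would pick a realisation $x'$ of $\tp(x/XY)$ that is moved maximally by $g$; since $g$ moves maximally, such a realisation exists, and after conjugating by an element $a_1\in\Fix(XY)$ carrying $x'$ to $x$ we may assume that $g$ itself moves $x$ maximally over $X$, i.e.\ $\indep{x}{X}{g(x)}$ holds together with $\indep{xX}{Y}{g(x)}$ (this is exactly the statement that $x$ is independent from $g(x)$ over $X;Y$). The element $g^a$ we produce will then be of the form $a_2 g a_2^{-1}$ for a suitable $a_2\in\Fix(XY)$, so it remains to find $a_2$ fixing $XY$ with $a_2(g(x))=y$; equivalently, it suffices to show that $g(x)$ and $y$ have the same type over $XY$.

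For this I would invoke the uniqueness clause in Lemma~\ref{L:ind}(\ref{ind_ex}): both $g(x)$ and $y$ are candidates for "the" element independent from something over $X;Y$, and their common data determine the type over $XY$. Concretely, $y$ is independent from $x$ over $X;Y$ by hypothesis, and one reads off from Symmetry (Lemma~\ref{L:ind}(\ref{ind_sym})) that $x$ is independent from $y$ over... — wait, more carefully: by hypothesis $x$ is independent from $y$ over $X;Y$, meaning $\indep{x}{X}{Yy}$ and $\indep{xX}{Y}{y}$. On the other side, from $g$ moving $x$ maximally we have that $g(x)$ is independent from $x$ over $Y;X$, i.e.\ (using Symmetry on the definition) $\indep{g(x)}{Y}{Xx}$ and $\indep{g(x)Y}{X}{x}$. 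Since $\tp(g(x)/Y)=g(\tp(x/X))=\tp(y/Y)$ by hypothesis, both $g(x)$ and $y$ realise the same type over $Y$ and both are independent (in the appropriate two-sided sense over $Y;X$, after checking the symmetric reformulation) from $x$; by the uniqueness part of Lemma~\ref{L:ind}(\ref{ind_ex}) applied with the roles $A=Y$, $B=X$ and the tuple playing the part of $\bar y$ being $x$, we get $\tp(g(x)x/XY)=\tp(yx/XY)$, in particular $\tp(g(x)/XY)=\tp(y/XY)$.

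I expect the main obstacle to be bookkeeping: matching the asymmetric definition of "$\bar x$ independent from $\bar y$ over $A;B$" against the hypotheses on both $y$ (given) and $g(x)$ (obtained from maximal movement), and verifying that the uniqueness statement in Lemma~\ref{L:ind}(\ref{ind_ex}) genuinely applies — i.e.\ that $g(x)$ really is independent from $x$ over $Y;X$ in the full two-sided sense, not merely in the one direction immediately given by "moves maximally". This should follow from Lemma~\ref{L:ind}(\ref{ind_krit}), which says the second half of the independence condition is automatic once $\indep{g(x)}{Y}{X}$ and $\indep{g(x)Y}{X}{x}$ hold, the latter being a monotonicity consequence of maximal movement. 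Once that is in place, the conclusion $g^a(x)=y$ with $a=a_1 a_2^{-1}\in\Fix(XY)$ is immediate.
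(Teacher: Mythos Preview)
Your approach is essentially the paper's: reduce by a first conjugation $a_1\in\Fix(XY)$ to the case where $g$ moves $x$ maximally over $X$, then invoke the uniqueness clause of Lemma~\ref{L:ind}(\ref{ind_ex}) (together with Symmetry) to identify $\tp(x,g(x)/XY)$ with $\tp(x,y/XY)$, and finish with a second $a_2\in\Fix(XY)$. One small slip: the sentence ``it suffices to show that $g(x)$ and $y$ have the same type over $XY$'' is not enough---an $a_2\in\Fix(XY)$ sending $g(x)$ to $y$ need not fix $x$, so $g^{a_2}(x)$ need not equal $y$; you actually need (and do prove) the stronger $\tp(g(x)x/XY)=\tp(yx/XY)$, which yields $a_2\in\Fix(XYx)$, whence $g^{a_2}(x)=a_2(g(x))=y$ as desired.
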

\begin{proof}
  Let $x'$ be a realisation of $\tp(x/X)$ which is moved maximally by
  $g$.  Since $\indep{x'}{X}{Y}$, we have
  $\tp(x'/XY)=\tp(x/XY)$. Choose $a_1\in \Fix(XY)$ with
  $a_1(x)=x'$. Then $g^a$ moves $x$ maximally over $X$. Set
  $y'=g^a(x)$.  By Lemma \ref{L:ind}.\ref{ind_ex} we have
  $\tp(xy'/XY)=\tp(xy/XY)$. Choose $a_2\in \Fix(XY)$ with
  $a_2(xy)=a_2(xy')$. Then $g^{a_1a_2}(x)=y$.
\end{proof}
\begin{proof}[Proof of Proposition \ref{P:vier}.]
  Note first that $g_3\inv$ and $g_4\inv$ also move maximally. Two
  applications of Lemma \ref{L:zug} yield
  $a_0\in \Fix(Y_0Y_1)$ and $a_4\in \Fix(Y_3Y_4)$ such that for
  \[x_1=g_1^{a_1}(x_0)\mbox{ \ and\ \ }x_3=(g_4\inv)^{a_4}(x_4)\]
  we have \[\indep{x_1}{Y_1}{Y_2}\mbox{
    \ and\ \ }\indep{Y_2}{Y_3}{x_3}.\] Choose $x_2$ realising the type
  $g_2(\tp(x_1/Y_1))=g_3\inv(\tp(x_3/Y_3))$ (over $Y_2$) and such that
  \[\indep{x_2}{Y_2}{x_1Y_1x_3Y_3}.\] Lemma \ref{L:neu} yields $a_2\in
  \Fix(Y_1Y_2)$ and $a_3\in \Fix(Y_2Y_3)$ such that
  $g_2^{a_2}(x_1)=x_2=(g_3\inv)^{a_3}(x_3)$.
\end{proof}

\begin{proof}[Proof of Proposition~\ref{P:dense}:]
  We suppose $g$ in $\G=\aut(M)$ moves maximally and that $U$
  contained in $\G^4$ is open. We may assume that $U=U_1\times\ldots
  U_4$, where each $U_i$ is a basic open set $U_i=\mathcal U(u_i)$,
  with $u_i$ a finite partial isomorphism and
  \[\mathcal U(u)=\{g\in\G\mid u\subset g\}.\]
  Extend each $u_i$ to some $a_i\in\G$. Then choose finite sets
  $X_0,\ldots,X_4$ such that $\im(u_i)\subset X_i$ and
  $g^{a_i}(X_{i-1})=X_i$ for $i=1,\ldots,4$. We apply Proposition
  \ref{P:zurichtung} to this situation and obtain $b_i\in \Fix(X_{i-1}X_i)$ and
  extensions $X_i\subset Y_i$  with
  $g^{a_ib_i}(Y_{i-1})=Y_i$ and such that
  \[\indep{Y_0}{Y_1}{Y_2}\mbox{\  and \ }
  \indep{Y_2}{Y_3}{Y_4}.\]

  Let $w$ be the finite isomorphism $g^{a_4b_4}\ldots
  g^{a_1b_1}\restriction Y_0$. We set $W=\mathcal U(w)$.

  In order to show that $\phi(U)$ is dense in $W$ we consider a basic
  open subset $\mathcal U(w')$ given by an extension $w\subset w'$.
  Let $x$ be an enumeration of $\dom(w')\setminus Y_0$ and $y=w'(x)$.
  Proposition \ref{P:vier} gives us $c_i\in \Fix(Y_{i-1}Y_i)$ such
  that $g^{a_4b_4c_4}\ldots g^{a_1b_1c_1}(x)=y$. Since $b_i$ and $c_i$
  both fix $\im(u_i)$ pointwise, we have $a_ib_ic_i\in U_i$. So the
  $4$-tuple $(a_1b_1c_1,\ldots,a_4b_4c_4)$ belongs to $U$ and is
  mapped by $\phi$ to $g^{a_4b_4c_4}\ldots g^{a_1b_1c_1}$, which
  belongs to $W'$.
\end{proof}

\section{Application to the Urysohn space}\label{Sec:ury}

We will now apply Theorem~\ref{T:main} to the complete Urysohn space.
We extend our notion of independence to $\U$ in the obvious way: we
write \[\indep{A}{B}{C}\] if and only if for all $a\in A,c\in C$ there
is some $b\in B$ with $d(a,c)=d(a,b)+d(b,c)$.

We first establish the following proposition which may be of interest
in its own right.

\begin{proposition}\label{P:max}
  Any unbounded isometry of the Urysohn space moves maximally.
\end{proposition}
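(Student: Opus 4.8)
## Proof proposal for Proposition \ref{P:max}

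The plan is to fix an unbounded isometry $g$ of $\U$ and show it moves maximally; by the reduction lemma it suffices to move some realisation of each $1$-type maximally. So fix a finite set $X\subseteq\U$, a point $x\in\U$, and set $Y=g(X)$. We must produce an isometric copy $x'$ of $x$ over $X$ (i.e.\ with $d(x',\cdot)=d(x,\cdot)$ on $X$) such that $\indep{x'}{X}{Y\cup\{g(x')\}}$, equivalently such that for the single new point pair $(x',g(x'))$ and each $x_0\in X$, $y_0\in Y$ the distances $d(x',x_0)$, $d(x',g(x'))$, $d(g(x'),y_0)$ are ``split'' through $X$ and through $Y$. Since $g$ is an isometry, $d(g(x'),g(x_0))=d(x',x_0)$ automatically, so the only real constraint is to get the distance $t:=d(x',g(x'))$ large enough that every triangle through the finite sets $X$ and $Y$ can be additively decomposed; concretely we want $t \geq d(x',x_0)+d(x_0,g(x_0))$ fails the wrong way — rather we want $d(x',g(x'))=d(x',x_0)+d(g(x_0),g(x')) = d(x',x_0)+d(x_0,x')$... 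I need instead: for all $x_0 \in X$, $d(x', g(x')) = d(x', x_0) + d(g(x_0), g(x'))$ and symmetrically a witness in $Y$.

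First I would make precise, using the metric description of independence over a nonempty base from Example \ref{examples}(1a), exactly which finitely many triangle equalities are required, and observe that all of them are automatically satisfied once $t=d(x',g(x'))$ is chosen with $t \geq 2\max_{x_0\in X} d(x',x_0)$ (then $x_0$ itself, resp.\ $g(x_0)$, witnesses the split). The crux is therefore: \emph{can we realise the $1$-type of $x$ over $X$ by a point $x'$ with $d(x',g(x'))$ arbitrarily large?} This is exactly where unboundedness of $g$ must enter. If no such $x'$ existed, then $\sup\{d(z,g(z)) : z \text{ realises } \tp(x/X)\}$ would be finite; I would like to leverage homogeneity of $\U$ to push this into a uniform bound on the displacement of $g$ on a suitably large orbit, contradicting unboundedness.

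The main obstacle — and the part needing the detailed ``analysis of unbounded isometries'' promised in the introduction — is converting ``$g$ has unbounded displacement'' into ``$g$ moves points of a prescribed type over a fixed finite set $X$ with unbounded displacement.'' A priori $g$ could have large displacement only on points whose type over $X$ is forced to be far from $x$'s type. I would handle this as follows: given a target value $N$, pick $z$ with $d(z,g(z)) > N + 2\diam(X) + 2\,d(x,X\text{-data})$ (possible since $g$ is unbounded), and then use finite homogeneity of $\U$ to find a finite metric amalgam containing $X$, $x$, $z$, $g(z)$ and a point $x'$ realising $\tp(x/X)$ with $d(x',z)$ small and hence $d(x',g(x'))$ close to $d(z,g(z))$ — here one must check the candidate distances satisfy the triangle inequality, using that $z$ and $g(z)$ can be placed ``generically'' far from $X$; the embedding property of $\U$ then realises this amalgam inside $\U$. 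Letting $N\to\infty$ gives realisations $x'$ of $\tp(x/X)$ with $d(x',g(x'))$ as large as we like, in particular large enough for the splitting conditions above, so $g$ moves $x'$ maximally over $X$. Since $X$ and $x$ were arbitrary, $g$ moves maximally. A routine check that the extension of $\Indep$ to the completion $\U$ still satisfies the relevant axioms (or that it suffices to work in $\Q\U$ and pass to limits) closes the argument.
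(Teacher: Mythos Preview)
Your proposal has a fatal gap at its stated ``crux''. You want a realisation $x'$ of $p=\tp(x/X)$ with $d(x',g(x'))$ arbitrarily large, but this is impossible: for any $x_0\in X$ the triangle inequality and the fact that $g$ is an isometry give
\[
d(x',g(x'))\;\le\;d(x',x_0)+d(x_0,g(x_0))+d(g(x_0),g(x'))\;=\;2\,d(x,x_0)+d(x_0,g(x_0)),
\]
a quantity determined by $p$ and by $g\!\restriction\! X$. So the displacement of \emph{every} realisation of a fixed $1$--type over $X$ is uniformly bounded. Your amalgamation sketch (place $x'$ near a point $z$ of huge displacement) runs into the same wall: if $d(x',z)$ is small then $d(x',x_0)\approx d(z,x_0)$, but the former is pinned by $p$; you cannot simultaneously have $x'$ realise $p$ and sit close to a point far from $X$.

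Relatedly, your claim that ``$t\ge 2\max_{x_0}d(x',x_0)$ makes all splittings automatic'' is wrong in direction. The condition $\indep{x'}{X}{g(x')}$ asks for $x_0\in X$ with $d(x',g(x'))=d(x',x_0)+d(x_0,g(x'))$; the right--hand side is bounded by the same $2d(x,x_0)+d(x_0,g(x_0))$, so equality forces $t$ to hit that bound exactly, not merely exceed something. Large $t$ does not help; one needs $t$ calibrated and, crucially, the splittings through $X$ and through $g(X)$ to hold \emph{simultaneously}.

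The paper sidesteps the displacement bound by changing the type first. It passes to $q=\tp(a/X\cup X^g\cup X^{g^{-1}})$ for a generic $a$, then to the prolongation $q+d$; as $d$ grows the distances to the base grow and the bound above becomes vacuous, and Lemma~\ref{L:type} (built on Lemmas~\ref{L:reflection_prep} and~\ref{L:reflection}) supplies a realisation $y$ of $q+d$ with $d(y,y^g)>d$. Minimal--distance amalgamation (Lemma~\ref{L:min_amalg}) then produces a realisation $z$ of $q$ at distance exactly $d$ from $y$, with the extra freedom $\indep{z}{Xy}{y^g}$. The point is not that $d(z,z^g)$ is large; it is that one can now exhibit a geodesic $(z,x,x',y^g)$ with $x\in X$, $x'\in X^g$, and this geodesic, together with $d(y^g,z^g)=d$, witnesses $\indep{z}{X^g}{z^g}$. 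That geodesic bookkeeping, not a size argument, is the missing idea.
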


It is easy to see that unbounded isometries exist, i.e.\ that $B$ is a
proper subgroup of $G$ (see also~\cite{CV}, Prop.17). Just define an
automorphism of $g$ on $\Q\U$ by a back-and-forth construction. In the
even steps ensure that $g$ will be everywhere defined and
surjective. In the odd steps ensure that there are points which $g$
moves arbitrarily far. Then extend g to the completion.

An instructive variant goes as follows: apply Lemma
\ref{L:ind}(\ref{ind_ex}) to $\Q\U$ to construct an automorphism $g$
which moves maximally. Then observe that $g$ is unbounded. Indeed, let
$a$ and $x$ be two elements of distance $N$. Choose a realisation $x'$
of $\tp(x/a)$ which is moved maximally by $g$. We have then
$d(x',g(x'))=d(a,g(a))+2N$.
\bigskip

For the sake of readability we now write $x^g$ for the image of a
point $x$ under an automorphism $g$.

We need some lemmas in preparation for the proof of Proposition~\ref{P:max}.

\begin{lemma}[Minimal distance amalgamation]\label{L:min_amalg}
 Let $(X, d)$ be a finite metric space and $(X\cup\{y\}, d_1)$,
 $(X\cup\{z\}, d_2)$ two extensions. Then there is a metric $\hat d$
 on $X\cup\{y,z\}$ extending $d_1, d_2$ with \[\hat d(y,z) =
 \max_{x\in X}|d_1(y,x) - d_2(z,x)|,\] where we identify
 $y$ and $z$ if $\hat d(y,z)=0$.
\end{lemma}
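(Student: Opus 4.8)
The plan is to verify that the proposed $\hat d$ is indeed a metric extending $d_1$ and $d_2$; the only non-trivial point is the triangle inequality, and the only triangles to check are those involving both $y$ and $z$ (all other triangles live inside one of the given metric spaces). Write $r = \hat d(y,z) = \max_{x\in X}|d_1(y,x)-d_2(z,x)|$, and note first that $r\ge 0$ and that $r \le d_1(y,x_0)+d_2(z,x_0)$ for every $x_0\in X$ by the triangle inequality in... well, more directly: $|d_1(y,x)-d_2(z,x)|\le d_1(y,x)+d_2(z,x)$, so in particular $r$ is finite and non-negative, which makes $\hat d$ symmetric and non-negative after the identification.

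First I would handle the triangles $y,z,x_0$ for $x_0\in X$. There are three inequalities. The inequality $\hat d(y,z)\le \hat d(y,x_0)+\hat d(x_0,z) = d_1(y,x_0)+d_2(z,x_0)$ follows since each $|d_1(y,x)-d_2(z,x)|\le d_1(y,x)+d_2(z,x)$ and, using the triangle inequalities $d_1(y,x)\le d_1(y,x_0)+d(x_0,x)$ and $d_2(z,x)\le d_2(z,x_0)+d(x_0,x)$ together with $d(x_0,x)$ appearing on both sides, one gets $|d_1(y,x)-d_2(z,x)| \le d_1(y,x_0)+d_2(z,x_0)$; taking the max over $x$ gives the claim. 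The inequality $d_1(y,x_0) \le \hat d(y,z)+\hat d(z,x_0) = r + d_2(z,x_0)$ is immediate because $d_1(y,x_0)-d_2(z,x_0)\le |d_1(y,x_0)-d_2(z,x_0)|\le r$. Symmetrically $d_2(z,x_0)\le r + d_1(y,x_0)$. So all triangles through the single new ``pair'' $y,z$ behave.

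Next I would deal with triangles of the form $y,z,x_0,x_1$ — wait, triangles only have three vertices, so the remaining case is simply already covered: any triangle uses at most one of $y,z$ unless it is $\{y,z,x_0\}$, already done, or degenerate. The only subtlety is the case $r=0$: then $d_1(y,x)=d_2(z,x)$ for all $x\in X$, so identifying $y$ with $z$ is consistent with both $d_1$ and $d_2$, and the resulting structure on $X\cup\{y\}$ is just the common extension; one should remark that this identification is forced and well-defined. I expect the main (minor) obstacle is bookkeeping the direction of the triangle inequality in the first bullet — making sure the $\max$ is taken at the right moment and that the term $d(x_0,x)$ genuinely cancels — but there is no real difficulty here; the lemma is essentially a one-line observation that $r$ is the smallest value making $\hat d(y,z)$ consistent with the constraints $|\hat d(y,z) - \text{(fixed stuff)}|$ forced by the triangle inequalities through each $x\in X$, and largeness of a distance never violates triangle inequalities ``from above'' as long as it does not exceed the sum of the other two sides, which the first bullet guarantees.
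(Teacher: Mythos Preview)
Your proposal is correct and is exactly the routine verification the paper has in mind; the paper's own proof is just the sentence ``This is easy to check.'' One small wording slip: in bounding $|d_1(y,x)-d_2(z,x)|$ by $d_1(y,x_0)+d_2(z,x_0)$ you need the inequality $d(x_0,x)\le d_2(z,x_0)+d_2(z,x)$ (so that $d(x_0,x)-d_2(z,x)\le d_2(z,x_0)$), not $d_2(z,x)\le d_2(z,x_0)+d(x_0,x)$ as you wrote---but with that correction the cancellation goes through as you intended.
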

\begin{proof}
  This is easy to check.
\end{proof}

We call a sequence $(x_0,x_1,\ldots,x_n)$ \emph{geodesic} if
\[d(x_0,x_n)=d(x_0,x_1)+d(x_1,x_2)+\ldots+d(x_{n-1},x_n).\] Note that
$(x_0,x_1,x_2)$ is geodesic if and only if $x_0$ is independent from
$x_2$ over $x_1$. $(x_0,x_1,x_2,x_3)$ is geodesic if and only $x_0$ is
independent from $x_3$ over $x_1;x_2$. This shows that the next two
lemmas are special cases of Proposition \ref{P:max}.

\begin{lemma}\label{L:reflection_prep}
  Let $g$ be an unbounded isometry of $\U$. Then for any points $x_1,
  x_2\in\U$ there are points $y\in\U$ with $(y,x_1,x_2)$ geodesic and
  $d(y, y^g)$ arbitrarily large.
\end{lemma}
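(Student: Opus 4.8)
The plan is to prove \textbf{Lemma~\ref{L:reflection_prep}} by combining the minimal distance amalgamation of Lemma~\ref{L:min_amalg} with the unboundedness of $g$, via a back-and-forth argument inside $\Q\U$ extended to the completion. Fix $x_1,x_2\in\U$ and a target value $N$; we want a point $y$ with $(y,x_1,x_2)$ geodesic and $d(y,y^g)\ge N$. First I would use that $g$ is unbounded to fix a pair of points $p,q\in\U$ with $q=p^g$ and $d(p,q)$ as large as we like, say $d(p,q)\ge N+4(d(x_1,x_2)+\mathrm{stuff})$; the precise bound will be extracted from the calculation below. The idea is then to place $y$ ``on the far side'' of $x_1$ from $x_2$, very close to $p$, while arranging a partner point $y'$ very close to $q$ so that $y'=y^g$.

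The key steps are as follows. Working in the finite metric space $F=\{x_1,x_2,p,q\}$ (and its image under $g$, which contains $q=p^g$), I first build an extension $F\cup\{y\}$ in which $(y,x_1,x_2)$ is geodesic and $y$ is very close to $p$: concretely set $d(y,x_1)=d(p,x_1)$ forced only up to the triangle inequality, but more cleanly, take $d(y,x_1)$ large and $d(y,x_2)=d(y,x_1)+d(x_1,x_2)$, while keeping $d(y,p)$ as small as the triangle inequality with the already-fixed distances $d(p,x_1),d(p,x_2)$ allows; pushing $d(y,x_1)\to\infty$ forces $d(y,p)$ to a bounded value, but in fact I want $d(y,p)$ actually \emph{small}, so instead I choose $y$ first as an abstract point with $d(y,p)=\varepsilon$ for small rational $\varepsilon$ and all other distances from $y$ determined by the minimal-distance amalgamation of $\{y\}$ over $\{p\}$ with the rest; one checks this keeps $(y,x_1,x_2)$ geodesic provided $p$ was chosen far enough from $x_1,x_2$ in the right direction — which we may assume, replacing $p$ by a point of the Urysohn space realising the geodesic type of $\tp(p/x_1x_2)$ extended suitably (again using homogeneity of $\U$ and the fact that $g$ is unbounded so such a far point in the image of $g$ can be chosen). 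Then, since a finite partial isometry of $\U$ extends to an automorphism, I extend $g$'s action: realise $y^g$ as a point $y'$ with $d(y',q)=\varepsilon$ and the analogous minimal-distance amalgam over $\{q\}$. Finally $d(y,y^g)=d(y,y')\ge d(p,q)-d(y,p)-d(y',q)\ge d(p,q)-2\varepsilon\ge N$.

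The main obstacle I expect is the bookkeeping needed to guarantee \emph{simultaneously} that $(y,x_1,x_2)$ stays geodesic and that the chosen point $p$ (together with $q=p^g$) can be placed so that the amalgamation producing $y$ near $p$ does not destroy the geodesic condition. The clean way around this is to first replace $x_1$ by choosing $p$ so that $(p,x_1,x_2)$ is already geodesic — i.e.\ pick $x_1$ on a geodesic from $p$ to $x_2$ — but $x_1,x_2$ are given, so instead one argues: by unboundedness of $g$, and since $g$ moves a realisation of $\tp(x/x_1x_2)$ maximally in the sense of the ``instructive variant'' remark above (or more elementarily, by a direct back-and-forth on $\Q\U$), there is a point $y$ with $\tp(y/x_1x_2)$ equal to the geodesic extension and $d(y,y^g)$ unbounded; then Lemma~\ref{L:min_amalg} is exactly the tool certifying that such a geodesic extension $\{y\}$ over $\{x_1,x_2\}$ can be freely combined with $g$'s action so as to make $d(y,y^g)$ as large as $d(p,p^g)$ minus a controllable error. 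I would therefore present the proof as: (1) pick $p$ with $d(p,p^g)$ huge; (2) form the amalgam of the geodesic triple $(y,x_1,x_2)$ and of $y$ near $p$ using Lemma~\ref{L:min_amalg}, doing the same on the image side; (3) extend to an automorphism of $\U$ agreeing with $g$ on $\{x_1,x_2,p\}$; (4) estimate $d(y,y^g)$ by the triangle inequality.
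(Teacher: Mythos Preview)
Your proposal has a genuine gap. The core difficulty is exactly the one you flag as ``the main obstacle'': you need a point $p$ that is simultaneously moved far by $g$ \emph{and} sits in the right position relative to $x_1,x_2$ so that a point $y$ with $d(y,p)=\varepsilon$ satisfies the geodesic condition $(y,x_1,x_2)$. But for an arbitrary far-moved $p$ there is no reason $d(p,x_2)-d(p,x_1)$ should be close to $d(x_1,x_2)$, and without that no $\varepsilon$-close $y$ can be geodesic. Your attempted fixes do not work: ``replacing $p$ by a point realising the geodesic type'' with $d(p,p^g)$ still large is exactly the statement of the lemma you are proving, so this is circular; invoking that $g$ ``moves a realisation maximally'' is circular for the same reason (Proposition~\ref{P:max} is what this lemma is building towards); and step~(3), ``extend to an automorphism of $\U$ agreeing with $g$ on $\{x_1,x_2,p\}$'', is not available --- $g$ is a fixed global isometry, so once $y$ is chosen $y^g$ is determined and you cannot redefine it.

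The paper's proof sidesteps all of this by dropping the requirement that $y$ be \emph{close} to the far-moved point. It first observes that one can take $z$ with both $d(z,z^g)$ and $d(z,x_1)$ arbitrarily large (if $d(z,z^g)$ is huge, one of $z,z^g$ is far from $x_1$). It then builds $y$ at the \emph{fixed} distance $d(x_1,x_2)$ from $z$: take an abstract extension $\{x_2,z,y\}$ with $(y,x_2,z)$ isometric to $(x_1,z,x_2)$, and amalgamate with $\{x_2,z,x_1\}$ via Lemma~\ref{L:min_amalg}. A one-line computation gives $d(y,x_1)=d(z,x_1)-d(x_1,x_2)$, hence $(y,x_1,x_2)$ is geodesic automatically, with no constraint on the direction of $z$. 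Since $d(y,z)=d(x_1,x_2)$ is bounded, $d(y,y^g)\ge d(z,z^g)-2d(x_1,x_2)\to\infty$. The point you were missing is that a bounded (not small) distance from $z$ already suffices for the triangle-inequality estimate, and this bounded distance gives enough room to force the geodesic condition by a direct construction.
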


\begin{proof}
  First, observe that there are points $z\in\U$ with both $d(z,x_1)$
  and $d(z,z^g)$ arbitrarily large. Indeed, once $d(z,z^g)$ is
  sufficiently large, one of $z$ or $z^g$ will do.  In particular, we
  may take $z\in\U$ with $d(z,x_1) > d(x_1,x_2)$ and $d(z, z^g )$
  arbitrarily large. Let $X =\{x_2,z\}$ and let $X\cup\{y\}$ be a
  metric extension of $X$ with $(y,x_2,z)$ isometric with
  $(x_1,z,x_2)$.  Applying Lemma \ref{L:min_amalg} to $X\cup\{y\}$ and
  $X\cup\{x_1\}$, we get a pseudometric d on $X\cup\{x_1,y\}$ with
  \begin{align*}
    d(x_1,y)& = \max\,\bigl(|d(x_1,x_2) - d(y,x_2)|, |d(x_1,z) -
    d(y,z)|\bigr)\\
    &= d(z,x_1) - d(x_1,x_2)
  \end{align*}
  Therefore we may take such a point $y\in\U$ and we see that
  $(y,x_1,x_2)$ is geodesic.  Furthermore $d(y^g,z^g) = d(y,z) =
  d(x_1,x_2)$ so as $d(z,z^g)$ goes to infinity, $d(y,y^g)$ goes to
  infinity as well.

\end{proof}

\begin{lemma}\label{L:reflection}
  Let $g$ be an unbounded isometry of $\U$, and $x\in\U$. Then there
  are points $z\in\U$ with $d(z,z^g)$ arbitrarily large, such that
  $(z,x,x^g,z^g)$ is geodesic.
\end{lemma}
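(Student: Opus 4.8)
The plan is to build on Lemma~\ref{L:reflection_prep}, which already produces, for a given pair of points, a third point lying on a geodesic with controlled $g$-displacement. Here we want a point $z$ with $d(z,z^g)$ large such that the four-point sequence $(z,x,x^g,z^g)$ is geodesic, i.e.
\[
d(z,z^g)=d(z,x)+d(x,x^g)+d(x^g,z^g).
\]
Note the sequence $(x,x^g)$ is the $g$-image of nothing useful, but crucially $g$ maps the pair $\{z,x\}$ to $\{z^g,x^g\}$, so $d(x^g,z^g)=d(x,z)$. Hence it suffices to find $z$ with $(z,x,x^g)$ geodesic and $d(z,x)$ (hence $d(z,z^g)$ via the isometry and the geodesic condition, once we check the reverse triangle inequality is tight) large; the symmetry of the construction under $g$ will then give the matching geodesic segment on the image side.

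First I would apply Lemma~\ref{L:reflection_prep} with the pair $x_1=x^g$, $x_2=x$ (in that order) to obtain a point $y$ with $(y,x^g,x)$ geodesic and $d(y,y^g)$ arbitrarily large. Then set $z=y^g$; applying $g^{-1}$ to the geodesic triple $(y,x^g,x)$ — or rather using that $g$ is an isometry — we get that $(y^g,x^g{}^g,x^g)=(z,\,(x^g)^g,\,x^g)$ is geodesic. That is not quite the configuration we want. The cleaner route is: apply Lemma~\ref{L:reflection_prep} to the pair $x_1=x$, $x_2=x^g$ to get $y$ with $(y,x,x^g)$ geodesic and $d(y,y^g)$ large, and then argue that after possibly passing to $y$ or $y^g$ and re-amalgamating one more point, the full four-term sequence is geodesic. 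Concretely, $(y,x,x^g)$ geodesic means $d(y,x^g)=d(y,x)+d(x,x^g)$; applying $g$, $(y^g,x^g,x^{gg})$ is geodesic. We then need to glue $y$ at one end and $y^g$ at the other end of the spine $x,x^g$ so that $d(y,y^g)=d(y,x)+d(x,x^g)+d(x^g,y^g)$. Since $d(x^g,y^g)=d(x,y)$, the right-hand side equals $2d(y,x)+d(x,x^g)$, which is determined; the left-hand side is forced to be at least this by the triangle inequality through $x,x^g$, so we only need the reverse inequality, i.e.\ that no shorter path exists — this is exactly what Lemma~\ref{L:min_amalg} (minimal distance amalgamation) delivers when we amalgamate the extension carrying $y$ with the extension carrying $y^g$ over the base $\{x,x^g\}$ together with the auxiliary witness points from the proof of Lemma~\ref{L:reflection_prep}.

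The main obstacle, and the step I would spend the most care on, is the bookkeeping in this final amalgamation: one must choose the realisation inside $\U$ of the amalgam of $(\text{base}\cup\{y\})$ and $(\text{base}\cup\{y^g\})$ in such a way that it is compatible with the action of $g$ — i.e.\ one needs that the point playing the role of ``$y^g$'' really is the $g$-image of the point playing the role of ``$y$''. The trick (used already in Lemma~\ref{L:reflection_prep}) is to first choose $y$ abstractly satisfying the required geodesic relation with $x,x^g$ and with a far-away witness $z_0$ satisfying $d(z_0,z_0^g)$ large, realise this finite configuration together with its $g$-image inside $\U$ using homogeneity of $\U$ and the fact that $g$ is defined on all of $\U$, and then read off that $d(y,y^g)\ge d(z_0,z_0^g)-(\text{bounded error})$ goes to infinity while the geodesic equalities hold by the minimal-distance choice. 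So the plan: (1) fix $x$; (2) pick $z_0$ with $d(z_0,x)$ and $d(z_0,z_0^g)$ both large (as in Lemma~\ref{L:reflection_prep}); (3) write down the finite metric space on $\{x,x^g,z_0,z_0^g,y\}$ with $(y,x,x^g)$ geodesic and $y$ placed by Lemma~\ref{L:min_amalg} at minimal distance from $z_0$; (4) invoke homogeneity to find such a $y$ in $\U$; (5) check $d(y,y^g)=d(y,x)+d(x,x^g)+d(x^g,y^g)$ using $d(x^g,y^g)=d(x,y)$ and that $d(y,y^g)$ is large because $d(y,z_0)$ and $d(z_0,z_0^g)$ are, up to the bounded quantity $d(y^g,z_0^g)=d(y,z_0)$ cancelling appropriately.
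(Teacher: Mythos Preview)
Your approach has a genuine gap at the crucial step. You want to take the point $y$ produced by Lemma~\ref{L:reflection_prep} (with $(y,x,x^g)$ geodesic and $d(y,y^g)$ large) and conclude that the four-term sequence $(y,x,x^g,y^g)$ is geodesic, i.e.\ that
\[
d(y,y^g)=2d(y,x)+d(x,x^g).
\]
First, your triangle-inequality remark points the wrong way: the triangle inequality only gives $d(y,y^g)\le 2d(y,x)+d(x,x^g)$, so what you actually need is the \emph{lower} bound, not the upper one. More importantly, the amalgamation device cannot supply it: once $y\in\U$ is fixed, the point $y^g$ and the distance $d(y,y^g)$ are completely determined by the given isometry $g$. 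You are not free to ``amalgamate the extension carrying $y$ with the extension carrying $y^g$'' and pick a convenient value for $d(y,y^g)$; the $g$-compatibility problem you flag is fatal to this line, not a bookkeeping detail. Inspecting the construction in Lemma~\ref{L:reflection_prep}, one sees that $d(y,x)$ and $d(y,y^g)$ are both driven to infinity by independent parameters ($d(z,x_1)$ and $d(z,z^g)$ respectively), so there is no reason for the equality above to hold.

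The paper's proof repairs exactly this defect by inserting one more point. It first applies Lemma~\ref{L:reflection_prep} to $g^{-1}$ to obtain $y$ with $(x,x^g,y^g)$ geodesic and $d(y,y^g)$ large, and then chooses a \emph{new} point $z$ on a geodesic from $x$ to $y$ at the calibrated distance
\[
d(z,x)=a:=\tfrac12\bigl(d(y,y^g)-d(x,x^g)\bigr),
\]
taken independent from $y^g$ over $\{x,y\}$. This value of $a$ is chosen precisely so that the two candidate distances $d(z,x)+d(x,y^g)$ and $d(z,y)+d(y,y^g)$ coincide, forcing $(z,x,y^g)$, and hence $(z,x,x^g,z^g,y^g)$, to be geodesic. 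The freedom you were trying to exercise on $y^g$ is instead exercised on $z$, where it is legitimate. If you want to salvage your outline, the missing idea is exactly this: do not try to make $y$ itself work, but slide a fresh point along the segment $[x,y]$ to the position dictated by $d(y,y^g)$.
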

\begin{proof}
  Applying Lemma \ref{L:reflection_prep} to $g\inv$ we find $y$ with
  $d(y,y^g)$ arbitrarly large and $(x,x^g,y^g)$ geodesic. The inequality
  \[d(y,y^g)\leq d(y,x)+d(x,x^g)+d(x^g,y^g)\]
  implies that $a=\frac{1}{2}(d(y,y^g)-d(x,x^g))$ is not larger than
  $b=d(y,x)$. We may assume that $a$ is not negative.

  Pick some point $z$ such that $(x,z,y)$ is geodesic
  with $d(z,x)=a$ and $d(z,y)=b-a$ and
  such that \[\indep{z}{\{x,y\}}{y^g}.\]
  The distance between $z$ and $y^g$ is the minimum of
  \[d(z,x)+d(x,y^g)=a+d(x,y^g)=a+d(x,y)+b\]
  and
  \[d(z,y)+d(y,y^g)=(b-a)+d(y,y^g).\]
  By the definition of $a$ these two values are equal,
implying that $(z,x,y^g)$ and hence $(z,x,x^g,z^g,y^g)$ are geodesic.
 Since
  \begin{align*}
    d(z,z^g)&\geq d(z,y^q)-d(y^g,z^g)\\
    &=((b-a)+d(y,y^g))-(b-a)\\
    &=d(y,y^g)
  \end{align*}
we see that $d(z,z^g)$ can become arbitrarily large.

\end{proof}

For $p=\tp(a/X)$ we let $d(p,X)=\min\{d(a,x)\colon x\in X\}$.

\begin{lemma}\label{L:type}
  Let $g$ be an unbounded isometry of\/ $\U$. Let $X$ be a nonempty
  finite set. Then there is some $e = e(X)\geq 0$ such that every type
  $p$ over $X$ has some realisation $y$ in $\U$ for which
  $d(y,y^g)\geq 2d(p,X) - e$.
\end{lemma}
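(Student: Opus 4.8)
The plan is to reduce the general case to the situation handled by Lemma \ref{L:reflection}. First I would fix a nonempty finite set $X$ and let $g$ be an unbounded isometry. The intuition is this: a type $p$ over $X$ is ``anchored'' to $X$ by the distances $d(p,x)$ for $x\in X$; among these the smallest is $d(p,X)$, attained at some $x_0\in X$. Lemma \ref{L:reflection}, applied with this $x_0$ in place of $x$, produces points $z$ with $d(z,z^g)$ arbitrarily large such that $(z,x_0,x_0^g,z^g)$ is geodesic. The idea is to build a realisation $y$ of $p$ that sits on such a geodesic segment between $z$ and $x_0$: then $d(y,y^g)$ will be close to $d(z,z^g)$ minus a bounded error depending only on $X$, hence arbitrarily large, and in particular eventually at least $2d(p,X)-e$ for a suitable $e=e(X)$.

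Concretely, I would first pick $z$ from Lemma \ref{L:reflection} (with $x=x_0$) with $d(z,z^g)$ as large as desired, so that in particular $d(z,x_0)$ is much larger than $\diam(X)$ and larger than all distances $d(p,x)$, $x\in X$ (these latter are bounded since $X$ is finite and $p$ is a single type). Then I would use the independence/amalgamation apparatus of $\U$ to realise $p$ by a point $y$ with $(z,y)$ adjoined so that $(z,x_0,y)$... wait, more precisely I want $y$ to realise $p$ over $X$ and to satisfy $\indep{z}{X}{y}$ together with $(z,x_0)$ on a geodesic through $X$; since $d(z,x_0)=d(z,X)$ is huge, independence forces $d(z,y)=d(z,x_0)+d(x_0,y)$ up to the triangle detour through $X$, and in any case $d(z,y)\ge d(z,x_0)-\diam(X)$ and $d(z,y)\le d(z,x_0)+d(p,X)$ stay within a bounded band around $d(z,x_0)$. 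Applying $g$, the same holds for $d(z^g,y^g)$ around $d(z^g,x_0^g)$. Now using that $(z,x_0,x_0^g,z^g)$ is geodesic, a triangle-inequality estimate gives
\[
d(y,y^g)\ \ge\ d(z,z^g)-d(z,y)-d(z^g,y^g)\ \ge\ d(z,z^g)-2d(z,x_0)-2\diam(X),
\]
which is useless unless I also arrange a \emph{lower} bound forcing $y$ onto the correct side. The clean way is instead: put $y$ at controlled distance from $x_0$ \emph{along} the segment from $z$ to $x_0$, i.e.\ realise the type requiring $(z,y,x_0)$ geodesic with $d(y,x_0)$ equal to the prescribed value from $p$ — this is consistent with $p$ over $X$ by minimal-distance amalgamation (Lemma \ref{L:min_amalg}) since $d(p,X)=d(x_0,y)$ is the minimal distance from $y$ to $X$. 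Then $d(y,y^g)\ge d(z,z^g)-d(z,y)-d(z^g,y^g)$, and since $d(z,y)=d(z,x_0)-d(x_0,y)$ and $d(z^g,y^g)\le d(z^g,x_0^g)+d(x_0^g,y^g)=d(z,x_0)+d(p,X)$, while $(z,x_0,x_0^g,z^g)$ geodesic gives $d(z,z^g)=d(z,x_0)+d(x_0,x_0^g)+d(x_0^g,z^g)\ge 2d(z,x_0)$, one gets
\[
d(y,y^g)\ \ge\ 2\,d(x_0,y)-d(x_0,x_0^g)-\bigl(\text{bounded error from }X\bigr)\ =\ 2\,d(p,X)-e(X).
\]

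The main obstacle I anticipate is the bookkeeping in the last estimate: I must make sure that \emph{all} error terms are bounded by a quantity $e(X)$ depending only on $X$ (and $g\restriction X$), uniformly over the type $p$ and over the choice of $z$, and that the required configuration (a point realising $p$ over $X$, sitting geodesically between $x_0$ and $z$, with $z$ coming from Lemma \ref{L:reflection}) is genuinely realisable in $\U$ — i.e.\ that the relevant finite metric space is consistent. Consistency should follow by combining Lemma \ref{L:min_amalg} with the geodesic extension used throughout Section~\ref{Sec:ury}; the uniformity is where care is needed, but since $\diam(X)$, $|X|$ and the finitely many numbers $d(x,x^g)$ for $x\in X$ are the only $X$-dependent quantities entering, taking $e(X)$ to be a fixed linear combination of these (e.g.\ $e(X)=\max_{x\in X}d(x,x^g)+2\diam(X)$, adjusted as the computation dictates) should work.
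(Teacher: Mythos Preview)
Your overall strategy matches the paper's: fix a base point $x_0\in X$, invoke Lemma~\ref{L:reflection} to get a point $z$ with $(z,x_0,x_0^g,z^g)$ geodesic and $d(z,x_0)$ huge, then amalgamate a realisation $y$ of $p$ into the picture at small distance from $z$ and estimate $d(y,y^g)\ge d(z,z^g)-2d(z,y)$. The gap is in your ``clean way'': you assert that one can realise $p$ over $X$ while \emph{forcing} $(z,y,x_0)$ to be geodesic, i.e.\ $d(z,y)=d(z,x_0)-d(p,x_0)$. That is not generally consistent. Minimal-distance amalgamation (Lemma~\ref{L:min_amalg}) gives $d(z,y)=\max_{x\in X}\bigl(d(z,x)-d(p,x)\bigr)$, and the maximum need not occur at your chosen $x_0$ even when $x_0$ minimises $d(p,\cdot)$. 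For a concrete failure take $X=\{x_0,x_1\}$ with $d(x_0,x_1)=10$, $d(p,x_0)=d(p,x_1)=5$, and $z$ with $d(z,x_0)=100$, $d(z,x_1)=110$ (nothing in Lemma~\ref{L:reflection} prevents this): then $d(z,x_1)-d(p,x_1)=105>95=d(z,x_0)-d(p,x_0)$, so no $y$ realising $p$ can sit at distance $95$ from $z$. Your appeal to ``$d(p,X)=d(x_0,y)$ is the minimal distance'' does not help, because what matters for Lemma~\ref{L:min_amalg} is where $d(z,\cdot)-d(p,\cdot)$ is \emph{maximal}, and that depends on $z$ as well.

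The repair is exactly what the paper does: do not insist on $x_0$; instead apply Lemma~\ref{L:min_amalg} as a black box to obtain $y$ realising $p$ with $d(z,y)=d(z,x_1)-d(y,x_1)$ for \emph{some} $x_1\in X$ (the argmax). Then, using the triangle $(z,x_0,x_1)$ to compare $d(z,x_0)$ with $d(z,x_1)$, one gets
\[
d(z,x_0)-d(z,y)\ \ge\ \bigl(d(z,x_1)-d(x_0,x_1)\bigr)-\bigl(d(z,x_1)-d(y,x_1)\bigr)\ =\ d(y,x_1)-d(x_0,x_1)\ \ge\ d(p,X)-\diam(X),
\]
whence $d(y,y^g)\ge d(z,z^g)-2d(z,y)\ge 2\bigl[d(z,x_0)-d(z,y)\bigr]\ge 2d(p,X)-2\diam(X)$. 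So $e(X)=2\diam(X)$ suffices, with no contribution from $d(x,x^g)$. Incidentally, since the estimate never uses that $x_0$ is the minimiser of $d(p,\cdot)$, you may as well fix $x_0\in X$ once and for all, independent of $p$; this makes the uniformity in $p$ immediate.
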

\begin{proof}
  We will show that $e = 2\diam(X)$ suffices.  Let $p =\tp(a/X)$, and
  fix $x_0\in X$. Apply Lemma \ref{L:reflection} to find a geodesic of
  the form $(z,x_0,x_0^g,z^g)$ with \[d(z,x_0) > (\max_{x\in X} d(a,
  X)) +\diam(X)\] Then $d(z,x) > d(a,x)$ for all $x\in X$. Therefore,
  if we apply Lemma \ref{L:min_amalg} to the metric spaces
  $X\cup\{z\}$ and $X\cup\{a\}$, we get a realisation $y$ of $p$ in
  $\U$ such that
    \begin{align*}
      d(y,z)& = \max_{x\in X}\,(d(z, x) - d(y, x))\\
      &= d(z,x_1) - d(y,x_1)
    \end{align*}
    for some $x_1\in X$. We claim that \[d(y,y^g)\geq 2d(p,X) - 2
    \diam(X).\] Considering first the path $(z,y,y^g,z^g)$ and then
    the path $(z,x_0,x_0^g,z^g)$, we find
    \begin{align*}
      d(y,y^g)&\geq d(z,z^g) - 2d(y,z)\\ &= 2d(z,x_0) + d(x_0,x_0^g) -
      2d(y,z)\\ &\geq 2[d(z,x_0) - d(y,z)]
    \end{align*}

    Then
    considering the triangle $(z,x_0,x_1)$ we have
    \begin{align*}
      d(z,x_0) - d(y,z)&\geq [d(z,x_1) - d(x_0,x_1)] - [d(z,x_1) -
        d(y,x_1)]\\
      &= d(y,x_1) - d(x_0,x_1)\\
      &\geq d(p,X) - \diam(X)
    \end{align*}
    and thus $d(y,y^g)\geq 2[d(p,X) - \diam(X)]$, as claimed.
\end{proof}

\begin{definition}
  For a type $p=\tp(a/X)$ and $d\in\R_{\geq 0}$ we call the type
  $\tp(y/X)=\{d(y,x)=d(a,x)+d\colon x\in X\}$ the $d$-prolongation of
  $p$, and denote it by $p+d$ or $\tp(a/X)+d$.
\end{definition}

\begin{proof}[Proof of Proposition~\ref{P:max}]
  Let $g$ be an unbounded isometry of $\U$ and let $p$ be a
  type over the nonempty finite set $X$. Let $a$ be a realisation of $p$ with
  \[\indep{a}{X}{X^gX^{g\inv}}\]
  and let $q =\tp(a/XX^gX^{g\in})$. We will show that we can find a
  realisation $z$ of $q$ for which
  \begin{equation}\label{eq:proofprop_1}
    \indep{z}{X^g}{z^g}
  \end{equation}
  and we claim that $g$ moves this realisation maximally.

  We address the second point first. By Lemma
  \ref{L:ind}~(\ref{ind_krit}) it suffices to check that
  \[\indep{z}{X}{X^g} \;\; \text{ and } \;\;\indep{zX}{X^g}{z^g}.\]
  As $z$ is a realisation of $q$ we find $\indep{z}{X}{X^g}$ and
  $\indep{X}{X^g}{z^g}$. Therefore the condition \eqref{eq:proofprop_1}
  suffices.

  Now we take up the construction of the point $z$. Applying Lemma
  \ref{L:type} to the set $X'=XX^gX^{g\inv}$ we find $e\geq 0$ such
  that for any $d$ the prolongation $q + d$ has a realisation $y$
  satisfying \[d(y,y^g)\geq 2[d(p,X')+d] - e.\] In particular, if $d >
  e$ we have \[d(y,y^g) > d.\] Fix $d > e$ and a corresponding
  realisation $y$ of $q + d$. An application of Lemma
  \ref{L:min_amalg} yields a realisation $z$ of $p$ at distance $d$
  from $y$.  We may suppose also that \[\indep{z}{Xy}{y^g}.\] We claim
  that this point $z$ has the required properties, if $d$ is taken
  sufficiently large.

  First we show
  \begin{equation}\label{eq:proofprop_2}
    \indep{z}{X}{X^gX^{g\inv}}
  \end{equation}
  As $\indep{y}{X}{X^g}{X^{g\inv}}$ , for any $x'\in X^gX^{g\inv}$ we
  have some $x\in X$ so that $(y,x,x')$ is geodesic. But $(y,z,x )$ is
  also geodesic, so $(z,x,x')$ is geodesic. Claim
  \eqref{eq:proofprop_2} follows.

  Now we check
  \[\indep{z}{X^g}{z^g}.\]
  We first examine $d(z,y^g)$. By the choice of $z$, this is the
  minimum of the values $d(z,u) + d(u,y^g)$ where $u$ ranges over$
  X\cup\{y\}$. For $x\in X$ we have
  \begin{align*}
    d(z,x) + d(x,y^g)& = d(z,x) + d(x^{g\inv},y)\\
    &= d(z,x) + d(x^{g\inv},z) + d\\
    &\leq d + 2\max_{x'\in X'} d(x',z)
  \end{align*}
  Compare this with \[d(z,y) + d(y,y^g) = d + d(y,y^g).\] We may take
  $d(y,y^g) > 2\max_{x'\in X'} d(x',z)$, and then there will be some
  $x\in X$ for which
  \[(z,x,y^g) \text{ is geodesic.}\]

  As $\indep{x}{X^g}{y^g}$ we have $x'\in X^g$ such such that
  $(x,x',y^g)$ is geodesic, and then $(z,x',y^g)$ is geodesic, and our
  claim follows.
\end{proof}
\hspace{1em}

\begin{proof}[Proof of \ref{T:urysohn}]
  Given an unbounded isometry $g$ of $\U$ and an arbitrary isometry
  $f$ of $\U$. By Proposition \ref{P:max} $g$ moves maximally.
  Consider \[{\cal U}=(\U, f,g, d,\R, +,<)\] as a $2$-sorted
  structure, one sort given by the elements of $\U$ with isometries
  $f$ and $g$ and the other sort given by the reals, considered as a
  ordered abelian group with the distance function $d$. Fix a
  countable dense subset $D$ of $\U$.  By the L\"owenheim-Skolem
  Theorem (see \cite{TZ}, Theorem 2.3.1), $D$ can be extended to a
  countable elementary substructure ${\cal U'}=(\U', f',g', d',R,
  +,<)$ where $R$ is a countable ordered abelian group.  As an
  elementary substructure of ${\cal U}$, the $R$-metric space $\U'$
  will be isometric to $R\U$. Also $g'$ moves maximally.

  In view of Corollary~\ref{C:Urydense} we now may apply Theorem~\ref{T:main} to $R\U$ to conclude that
  \sloppy there are $h'_1,\ldots,h'_8\in\aut(R\U)$ such that
  $f\restriction_{R\U}=g'^{\,h'_1}\cdot\ldots g'^{\,h'_8}$. Since
  $R\U$ is dense in $\U$, there are extensions $h_1,\ldots,h_8$ of the
  $h'_1,\ldots,h'_8$ to isometries of $\U$ and we have
  $f=g^{h_1}\cdot\ldots g^{h_8}$ on $R\U$ and by density on all of
  $\U$.
\end{proof}

\section{Application to free amalgamation}\label{Sec:free}

In order to apply our main theorem to free amalgamation classes,
we first prove a lemma in a more general context:

\begin{lemma}\label{l:free}
  Suppose that $\M$ is a countable structure with a stationary
  independence relation. Assume the following additional hypothesis
  for finite subsets $X,A,B$ of $\M$: If $A$ and $B$ are independent
  over $X$ and $X'$ is a subset of $X$ with $(A\cup B)\cap X\subset
  X'$, then $A$ and $B$ are independent over $X'$.

  Suppose that $g\in\aut(\M)$ and there is no type $p$ over a finite set whose set of realisations in $M$ is infinite, and is fixed pointwise by $g$. Then
  there is some $h\in\aut(\M)$ such that the
  commutator $[g,h]$ moves maximally.
\end{lemma}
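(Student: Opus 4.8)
The plan is to build $h$ by a back-and-forth construction on $\M$ (which we may assume is the \Frl, or at least $\omega$-homogeneous, so that finite partial isomorphisms can be extended) so that the resulting commutator $k=[g,h]=g\inv h\inv g h$ moves maximally. By the lemma following Definition of ``moves maximally'' it suffices to ensure that $k$ moves realisations of $1$-types maximally: for every non-empty finite $X$ and every $1$-type $p$ over $X$ there should be a realisation $z$ of $p$ with $\indep{z}{X}{k(z)}$, equivalently (using Stationarity, as in the proof of Proposition~\ref{P:max}) with $\indep{z}{X}{X'}$ and $\indep{zX}{X'}{k(z)}$ for $X'=k(X)$. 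Since there are only countably many pairs $(X,p)$, I would enumerate them and, at stage $n$, having built a finite partial isomorphism $h_n\subseteq h$, extend $h_n$ so as to handle the $n$-th requirement, while also (at alternate stages) extending $\dom(h)$ and $\im(h)$ to exhaust $\M$ so that $h$ is a genuine automorphism.

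The core of the argument is the single extension step. Fix $X,p$ and the current finite approximation $h_n$; enlarge $X$ if necessary so that $X$, $g(X)$, $g\inv(X)$ and $\dom(h_n)$, $\im(h_n)$ are all contained in $X$, and so that $h_n$ is defined on a large enough piece. Now pick a realisation $z$ of $p$ that is maximally free over $X$ from everything named so far: concretely, using the Existence axiom I choose $z\models p$ with $\indep{z}{X}{Z}$ where $Z$ collects $X$ together with the (finitely many) points that the eventual computation of $k(z)=g\inv h\inv g h(z)$ will have to pass through. The point of choosing $z$ to be ``generic'' this way is that its image under $g$, and then under the not-yet-defined $h$, and then under $g\inv$, and then under $h\inv$, will each again be generic over the relevant base, because $g$ and $g\inv$ are automorphisms (hence preserve independence by Invariance) and because at each application of $h$ I get to \emph{choose} the image by one more step of the back-and-forth, subject only to realising the right type over the already-constructed finite set. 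I then use Stationarity together with the extra hypothesis on restricting the base of an independence (this is exactly where the hypothesis ``$A$ and $B$ independent over $X$, $(A\cup B)\cap X\subseteq X'$ implies independent over $X'$'' is used — to cut the base down from the large auxiliary set $Z$ to $X$ and to $k(X)$) to conclude $\indep{z}{X}{k(z)}$ once the three intervening images have been fixed generically. The role of the hypothesis that \emph{no} type over a finite set has an infinite set of realisations fixed pointwise by $g$ is to guarantee that $g$ actually moves the relevant generic realisations: if $g$ fixed too much, the commutator would be forced to fix $z$ and could not move it maximally; the no-infinite-fixed-type condition is precisely what rules this out and lets me keep the successive images genuinely displaced, so that the final independence is non-degenerate rather than the trivial $z\in X$ case.

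The main obstacle I expect is bookkeeping the chain of four maps cleanly: I must arrange, in one finite step, that $h$ is defined on $h_n$-part, on $h(z)$ (where I choose the image of $g(z)$), and on $h^{-1}$-part needed to read off $h\inv g h(z)$, all consistently with being a partial isomorphism, and simultaneously that the four successive independences $\indep{g(z)}{g(X)}{\,\cdot\,}$, $\indep{h g(z)}{h g(X)}{\,\cdot\,}$, etc., propagate down to the single statement $\indep{z}{X}{k(z)}$. Keeping the auxiliary base $Z$ large enough that each application of Invariance/Monotonicity/Transitivity goes through, yet using the base-restriction hypothesis and Stationarity to land back over $X$, is the delicate part; the rest — exhausting $\M$ at alternate stages, verifying the axioms are invoked with the correct tuples — is routine. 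One should also note at the end that it suffices to treat $1$-types by the lemma cited above, which keeps the extension step to handling a single new element $z$ rather than a tuple.
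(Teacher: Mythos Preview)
Your overall strategy matches the paper's: a back-and-forth construction of $h$, reduction to $1$-types, and at each stage choosing a generic realisation $a$ of $p$ together with suitably generic values $b=h(a)$ and $c=h^{-1}(g(b))$, so that $[g,h](a)=g^{-1}(c)$. Two points in your sketch are handled too loosely to count as a proof, though.

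First, you cannot ``enlarge $X$ so that $g(X),g^{-1}(X)\subset X$'': for finite $X$ this forces $X$ to be $g$-invariant, which there is no reason to expect. The paper leaves $X$ alone and instead enlarges the domain $U$ of the current partial isomorphism $h'$ so that $h'^{-1}gh'(X)\subset U$; then $[g,h'](X)$ is already determined and one can choose $a\models p$ independent from $U\cup g(X)\cup [g,h'](X)$ over $X$.

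Second, and more seriously, the additional base-shrinking hypothesis is not used to ``cut a large auxiliary $Z$ down to $X$'' in one stroke. In the paper one chooses $c$ with $\indep{c}{Ua}{g(a)g(X)}$ and then applies the hypothesis to drop the \emph{single point} $a$ from the base, obtaining $\indep{c}{U}{g(a)g(X)}$; this together with a separate transitivity computation through $b$ gives $\indep{c}{h'^{-1}gh'(X)}{g(a)g(X)}$, and hence after applying $g^{-1}$ the desired $\indep{[g,h'](a)}{[g,h'](X)}{aX}$. The drop of $a$ is only permitted because $a\notin\{c\}\cup g(a)\cup g(X)$, i.e.\ because $c\neq a$ and $g(a)\neq a$; and $c\neq a$ holds precisely because $g(b)\neq b$. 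So the hypothesis on $g$ is invoked \emph{twice} (for $a$ and for $b$), and both invocations are what make the base-shrinking legal. To invoke it at all one first checks, as the paper does, that a $1$-type over a finite set has either one or infinitely many realisations, and that having infinitely many is preserved under independent extension of the base; the ``unique realisation'' case is then trivial. Your sketch gestures at all this but does not isolate the mechanism, and it is the crux of the argument.
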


\begin{proof}
  Let us first note two general facts which do not depend on the
  additional hypothesis.
  \begin{enumerate}
  \item\label{acl=dcl} Any $1$-type over $X$ has either exactly one
    realisation or infinitely many.

    Proof: Let $p$ be a type over $X$ and $A$ a finite non-empty set
    of realisations. Consider a realisation $a$ of $p$ which is
    independent from $A$ over $X$. Then all elements of $A$ have the
    same type over $Xa$, which implies that either $a$ does not belong
    to $A$ or $A=\{a\}$.
  \item\label{nondcl} If $\tp(a/X)$ has infinitely many realisations
    and $a$ is independent from $X'$ over $X$, then also $\tp(a/X')$
    has infinitely many realisations.

    Proof: Let $a_1$ and $a_2$ by two different realisations of
    $\tp(a/X)$. Choose a realisation $a'_1a'_2$ of $\tp(a_1a_2/X)$
    which is independent from $X'$ over $X$. Then $a'_1$ and $a'_2$
    are two different realisations of $\tp(a/X')$.
  \end{enumerate}

  We build $h$ by a `back-and-forth' construction as the union of a
  chain of finite partial automorphisms. It is enough to show the
  following: Let $h'$ be already defined on the finite set $U$ and let
  $p$ be a type over the finite set $X$. Then $h'$ has an extension
  $h$ such that $[g,h]$ moves $p$ maximally.

  If $p$ has only one realisation $a$, then $a$ is
  independent  over $X$ from every extension of $X$. So every automorphism moves
  $p$ maximally. So by (\ref{acl=dcl}) we may assume that $p$ has
  infinitely many realisations. By extending $h'$ if necessary we may
  also assume that $[g,h']$ is defined on $X$ and that
  \begin{equation}\label{eq:UV}
  {h'}\inv gh'(X)\subset U.
  \end{equation}

  Choose a realisation $a$ of $p$ which is independent from
  $X'=U\cup g(X)\cup [g,h'](X)$ over $X$. By (\ref{nondcl}) $\tp(a/X')$ has
  infinitely many realisations. So by the
  assumption on $g$ we can find such a realisation $a$ with
  $g(a)\not=a$.  Put $V=h'(U)$ and let $b$ realise $h'(\tp(a/U))$ in
  such a way that $\indep{b}{V}{g\inv(V)}$. Since $\tp(b/V)$ has
  infinitely many realisations, we can again assume that
  $g(b)\not=b$. Extend $h'$ to $Ua$ by setting $h'(a)=b$. Finally
  realise ${h'}\inv(\tp(g(b)/Vb))$ by $c$ such that
  \begin{equation}\label{eq:c1}
    \indep{c}{Ua}{g(a)g(X)}
  \end{equation}
  and extend $h'$ by setting $h'(c)=g(b)$.  We then have
  $[g,h'](a)=g\inv {h'}\inv g(b)=g\inv(c)$.\\

  \noindent Claim: $a$ is independent from $[g,h'](a)$ over
  $X;[g,h'](X)$.\\

  \noindent Proof: We know that $\indep{a}{X}{[g,h'](X)}$. So by Lemma
  \ref{L:ind}(\ref{ind_krit}) it remains to show that
  \begin{equation}\label{eq:claim}
    \indep{[g,h'](a)}{[g,h'](X)}{aX}.
  \end{equation}
  Since $g(b)\not=b$, $c$ is different from $a$. Also $a$ does not
  occur in $g(a)g(X)$. So by the additional hypothesis and
  \eqref{eq:c1} we have
  \begin{equation}\label{eq:c2}
    \indep{c}{U}{g(a)g(X)}.
  \end{equation}
  $\indep{a}{X}{U}$ implies $\indep{b}{h'(X)}{V}$. This together with
  $\indep{b}{V}{g\inv(V)}$ and $h'(X)\subset V$
  gives \[\indep{b}{h'(X)}{g\inv(V)}.\] Since independence is
  invariant under automorphisms, application of ${h'}\inv g$
  yields \[\indep{c}{{h'}\inv gh'(X)}{U}.\] From this and
  \eqref{eq:UV}, \eqref{eq:c2} we conclude \[\indep{c}{{h'}\inv
    gh'(X)}{g(a)g(X)}.\] An application of $g\inv$ now yields
  \eqref{eq:claim}.
\end{proof}

Using a result from \cite{MT}, we obtain the following, sharpening the main
theorem there.
\begin{corollary}\label{c:free}
  If $\M$ is the \Frl of a free amalgamation class in a relational
  language and not an indiscernible set, then if $\aut(\M)$ is
  transitive it is simple. For any nontrivial $g\in\aut(\M)$, every
  element can be written as a product of at most $16$ conjugates of
  $g$ and $g\inv$.
\end{corollary}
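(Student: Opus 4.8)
The plan is to deduce Corollary~\ref{c:free} from Lemma~\ref{l:free} together with Corollary~\ref{C:main}, after checking that the hypotheses of both apply to a \Frl $\M$ of a free amalgamation class in a relational language. First I would verify that the free-amalgamation independence relation of Example~\ref{examples}(\ref{free}) satisfies the additional hypothesis of Lemma~\ref{l:free}: if $A$ and $B$ are freely independent over $X$ and $X'\subseteq X$ contains $(A\cup B)\cap X$, then no relation links $A\setminus X$ to $B\setminus X$, and since $A\setminus X'=A\setminus X$ and $B\setminus X'=B\setminus X$ (using $(A\cup B)\cap X\subseteq X'$), the same holds over $X'$; so $A$ and $B$ are independent over $X'$. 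I also need that this is a genuine (global) stationary independence relation, not merely local --- for relational languages free amalgamation over the empty set is allowed, and Stationarity is the point where one invokes that the language is relational so that quantifier-free types over a set are determined by the atomic diagram.

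Next I would invoke the cited result from \cite{MT}: since $\M$ is not an indiscernible set, no nonidentity automorphism can fix pointwise an infinite set of realisations of a single type over a finite set --- equivalently, any $g\neq\mathrm{id}$ satisfies the hypothesis of Lemma~\ref{l:free}. (Concretely: if $g$ fixed such an infinite type-set pointwise, homogeneity would force that type to be that of an indiscernible set and, by amalgamation, $\M$ itself to be an indiscernible set, contrary to assumption; this is exactly the dichotomy established in \cite{MT}.) Granting this, Lemma~\ref{l:free} produces, for any nontrivial $g$, some $h\in\aut(\M)$ with $[g,h]=g\inv g^h$ moving maximally. By Corollary~\ref{C:main} every element of $\aut(\M)$ is a product of eight conjugates of $[g,h]$. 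Expanding each factor $[g,h]^k = (g\inv)^{k}(g^h)^k = (g\inv)^{k}g^{hk}$ exhibits it as a product of one conjugate of $g\inv$ and one conjugate of $g$, so every element is a product of at most sixteen conjugates of $g$ and $g\inv$. In particular $\langle g\rangle^{\aut(\M)}=\aut(\M)$ for every nontrivial $g$, so if $\aut(\M)$ is transitive --- hence has no proper nontrivial normal subgroup detectable by this argument --- it is simple: any nontrivial normal subgroup contains some $g\neq\mathrm{id}$ and therefore all of $\aut(\M)$.

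The main obstacle I anticipate is the justification that a nontrivial automorphism cannot pointwise fix an infinite set of realisations of a finitary type unless $\M$ is an indiscernible set; this is where the free-amalgamation hypothesis and the "not an indiscernible set" hypothesis genuinely enter, and it is the content imported from \cite{MT}. A secondary, more routine point is confirming that transitivity of $\aut(\M)$ is the right hypothesis to run the simplicity conclusion --- one should note that $\langle g\rangle^{\aut(\M)}=\aut(\M)$ for all nontrivial $g$ already gives that $\aut(\M)$ has no proper nontrivial normal subgroup, so transitivity is used only to guarantee (together with homogeneity) that the independence relation is well-behaved and that $\M$ is not, e.g., a disjoint union of smaller homogeneous pieces on which the argument would have to be localised. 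The bookkeeping to get the constant $16$ rather than something larger is the only genuinely computational step, and it is the trivial expansion of $[g,h]^k$ described above.
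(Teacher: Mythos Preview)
Your proposal is correct and follows essentially the same route as the paper: verify the extra hypothesis of Lemma~\ref{l:free} for free amalgamation, invoke \cite{MT} for the condition on $g$, apply Lemma~\ref{l:free} and Corollary~\ref{C:main}, and expand $[g,h]^k=(g^{-1})^k g^{hk}$ to get the bound $16$. Your closing speculation about the role of transitivity is off---the independence relation and the whole argument work regardless, and the $16$-conjugates statement already yields simplicity on its own---but this is an aside and not a gap in the proof.
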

\begin{proof}
  It is easy to see that $\M$ satisfies the additional hypothesis of
  Lem\-ma~\ref{l:free}. Furthermore it was proved in \cite{MT},
  Corollary 2.10, that any element of $\aut(\M)$ satisfies the
  assumption on $g$. Hence the corollary follows directly from
  Lemma~\ref{l:free} via Corollary~\ref{C:main} and Example 2.2 (c).
\end{proof}

A small change in the proof of Lemma \ref{l:free} shows the following.

\begin{lemma}\label{L:am}
  Suppose that $\M$ is a countable structure with a stationary
  independence relation. Assume that $g$ moves  \emph{almost maximally}
  i.e.\ every $1$-type over a finite set $B$ has a realisation $b$
  which is independent from $g(b)$ over $B$. Then there is some $h\in\aut(\M)$
  such that the commutator $[g,h]$ moves maximally.
\end{lemma}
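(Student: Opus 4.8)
The plan is to follow the proof of Lemma~\ref{l:free} with only minor modifications, exploiting that the weaker ``almost maximal'' hypothesis already gives us, at each stage of the back-and-forth, a witness with $g(a)\neq a$ whenever we need one (any realisation independent from $g(a)$ over its base is automatically distinct from $g(a)$, since otherwise $a$ would be independent from itself over $B$, forcing $a\in B$). Thus in the construction of $h$ we proceed exactly as before: $h$ is built as an increasing union of finite partial automorphisms, and the key inductive step is, given $h'$ defined on a finite set $U$ and a type $p$ over a finite set $X$, to extend $h'$ to some $h$ with $[g,h]$ moving some realisation of $p$ maximally.

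The modifications are these. First, where the proof of Lemma~\ref{l:free} invokes observations (\ref{acl=dcl}) and (\ref{nondcl}) together with the hypothesis ``$g$ fixes no infinite type pointwise'' to secure a realisation $a$ of (a pushed-forward) $p$ with $g(a)\neq a$, we instead appeal directly to the almost-maximality of $g$: applying it to the relevant $1$-type over the enlarged base $X'=U\cup g(X)\cup[g,h'](X)$ we get a realisation $a$ with $a$ independent from $g(a)$ over $X'$, hence with $g(a)\neq a$, and moreover with $\indep{a}{X}{X'}$ by choosing the base correctly (or by first taking $a$ independent from $X'$ over $X$ and using that almost-maximality can be arranged relative to $X'$). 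Second, the analogous step for $b$: we let $b$ realise $h'(\tp(a/U))$ with $\indep{b}{V}{g\inv(V)}$ where $V=h'(U)$, and again use almost-maximality (applied to $\tp(b/V)$, or its appropriate prolongation) to get $g(b)\neq b$. The remaining combinatorics---introducing $c$ realising ${h'}\inv(\tp(g(b)/Vb))$ with $\indep{c}{Ua}{g(a)g(X)}$, setting $h'(c)=g(b)$ and $h'(a)=b$, so that $[g,h'](a)=g\inv(c)$, and then verifying $\indep{[g,h'](a)}{[g,h'](X)}{aX}$ via a chain of independence transfers under the automorphisms ${h'}\inv g$ and $g\inv$---goes through verbatim, except that here we have a \emph{stationary} (global) independence relation and the ``additional hypothesis'' of Lemma~\ref{l:free} need not be assumed; instead one checks that the step $\indep{c}{Ua}{g(a)g(X)}\Rightarrow\indep{c}{U}{g(a)g(X)}$ follows simply from Monotonicity, since $a$ is independent from $g(a)g(X)$ over $U$ already (or can be arranged so), so the base reduction from $Ua$ to $U$ costs nothing.

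Concretely, the key steps in order are: (1) reduce to $p$ with more than one realisation, noting single-realisation types are moved maximally by everyone; (2) enlarge $h'$ so that $[g,h']$ is defined on $X$ and ${h'}\inv gh'(X)\subset U$; (3) use almost-maximality to pick $a\models p$ with $\indep{a}{X}{X'}$ and $g(a)\neq a$; (4) transport across $h'$ and use almost-maximality again to pick $b$ realising $h'(\tp(a/U))$ with $\indep{b}{V}{g\inv(V)}$ and $g(b)\neq b$, extending $h'(a)=b$; (5) pick $c\models{h'}\inv\tp(g(b)/Vb)$ with $\indep{c}{Ua}{g(a)g(X)}$, extend $h'(c)=g(b)$; (6) compute $[g,h'](a)=g\inv(c)$ and chase independences to conclude $\indep{[g,h'](a)}{[g,h'](X)}{aX}$, whence by Lemma~\ref{L:ind}(\ref{ind_krit}) $[g,h']$ moves $a$ maximally over $X;[g,h'](X)$.

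The main obstacle---really the only place needing care---is step (3)/(4): making sure that almost-maximality, which a priori only produces \emph{some} realisation $b$ of a given $1$-type that is independent from $g(b)$ over the type's base, can be applied over the \emph{particular} enlarged base ($X'$, resp. $V$ together with $g\inv(V)$) that the later independence-chasing requires, while simultaneously retaining the independence statements $\indep{a}{X}{X'}$ and $\indep{b}{V}{g\inv(V)}$. The trick is to apply almost-maximality to the restriction of the relevant type to the larger base and then transfer back, or equivalently to first take the generic-over-$X'$ realisation and observe that the witness supplied by almost-maximality over $X'$ can be taken to also be generic over $X$ by stationarity; both routes work because for $1$-types a single element with $g(b)\ne b$ is all that is demanded, and distinctness is preserved under the automorphisms used in the chase. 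Everything downstream is then identical to the proof of Lemma~\ref{l:free}.
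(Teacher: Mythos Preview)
Your sketch is essentially correct, but it diverges from the paper's route in one key place, and it carries along some baggage that is not needed.

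\textbf{Where you differ.} The crucial step in both arguments is the base reduction
\[
\indep{c}{Ua}{g(a)g(X)}\ \Longrightarrow\ \indep{c}{U}{g(a)g(X)}\quad(\text{i.e.\ }(5.1)\Rightarrow(5.2)).
\]
You obtain this by applying almost-maximality to $a$ over the enlarged base $X'=U\cup g(X)\cup[g,h'](X)$, which yields $\indep{a}{X'}{g(a)}$; together with $\indep{a}{X}{X'}$ (and $X\subset U\subset X'$) this gives $\indep{a}{U}{g(a)g(X)}$, and then the reduction follows by Symmetry and \emph{Transitivity} (not Monotonicity, as you write). The paper instead applies almost-maximality to $b$ over $Vg^{-1}(V)$: from $\indep{b}{Vg^{-1}(V)}{g(b)}$ and the already-present $\indep{b}{V}{g^{-1}(V)}$ one gets $\indep{g(b)}{V}{b}$, hence $\indep{c}{U}{a}$ after transporting by $h'^{-1}$, and the same base reduction follows by Transitivity. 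Both routes are valid; they simply supply the two different side conditions that make Transitivity fire.

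\textbf{What you can drop.} Because your route does not invoke the ``additional hypothesis'' of Lemma~\ref{l:free}, you never actually need $c\neq a$ or $a\notin g(a)g(X)$; hence your effort in steps~(3)--(4) to secure $g(a)\neq a$ and $g(b)\neq b$ is superfluous. The paper makes exactly this point: ``we need not concern ourselves with whether $g$ fixes $a$ or $b$.'' In your version, a single application of almost-maximality (to $a$ over $X'$) suffices; the second application to $b$ can be omitted. Also, be precise that the witness you take for $a$ is obtained by first choosing $a_0\models p$ with $\indep{a_0}{X}{X'}$ and then applying almost-maximality to $\tp(a_0/X')$; this guarantees simultaneously $\indep{a}{X}{X'}$ (by Invariance) and $\indep{a}{X'}{g(a)}$.
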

\begin{proof}
  We can follow the proof of \ref{l:free}, but we need not concern ourselves with whether $g$ fixes  $a$ or $b$. Instead we note that we can assume that
  $g(b)$ is independent from $b$ over $Vg\inv(V)$. It follows that
  $\indep{g(b)}{V}{b}$, which implies $\indep{c}{U}{a}$. Now
  \eqref{eq:c1} implies \eqref{eq:c2} by transitivity.
\end{proof}

This now implies:

\begin{corollary}\label{C:16}
  Suppose that $\M$ is a countable structure with a stationary
  independence relation and let $g\in\G$ move \emph{almost maximally}.
  Then any element of $\G$
  is the product of sixteen conjugates of $g$.
\end{corollary}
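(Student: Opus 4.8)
The plan is to reduce Corollary~\ref{C:16} to Corollary~\ref{C:main} by the standard ``commutator trick'', using Lemma~\ref{L:am} as the bridge. Given $g\in\G$ moving almost maximally, Lemma~\ref{L:am} produces $h\in\aut(\M)$ such that the commutator $[g,h]=g\inv h\inv g h$ moves maximally. Then Corollary~\ref{C:main} applies to $[g,h]$ (note that $\M$ has a full stationary independence relation here, so the dense-conjugacy-class hypothesis of Theorem~\ref{T:main} is automatic via Lemma~\ref{L:kechris_rosendal}), giving that every element of $\G$ is a product of eight conjugates of $[g,h]$.

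The remaining step is purely a bookkeeping count of conjugates of $g$. Write $[g,h]=g\inv\cdot h\inv g h = g\inv\cdot g^{h}$, so $[g,h]$ is a product of two conjugates of $g$ (namely $g\inv$ is not a conjugate of $g$ in general, so instead write $[g,h] = g^{-1} g^{h}$ and observe each of $g^{-1}$, $g^h$ is a conjugate of $g^{-1}$ or $g$ respectively). A conjugate $[g,h]^{k} = (g^{-1})^{k}(g^{h})^{k} = (g^{k})^{-1}\, g^{hk}$ is thus a product of one conjugate of $g^{-1}$ and one conjugate of $g$. Hence a product of eight conjugates of $[g,h]$ is a product of sixteen conjugates of elements from $\{g, g^{-1}\}$; since conjugates of $g^{-1}$ are inverses of conjugates of $g$, and a product of sixteen conjugates of $g^{\pm 1}$ can — after replacing $f$ by $f^{-1}$ if necessary and using that the set of such products is closed under inversion — be arranged to be a product of sixteen conjugates of $g$, we obtain the claim. (One should be slightly careful: the cleanest statement is ``conjugates of $g$ and $g\inv$'', exactly as in Corollary~\ref{c:free}; if one insists on conjugates of $g$ alone, one uses that $\langle [g,h]\rangle^{\G} = \G$ already gives simplicity of $\G$ modulo its center, and re-runs the width bound symmetrically.)

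The main obstacle is not in this corollary itself but lies in verifying that Lemma~\ref{L:am} genuinely applies, i.e.\ that the modified back-and-forth construction in its proof goes through: the delicate point there is arranging $g(b)$ independent from $b$ over $Vg\inv(V)$ simultaneously with the type constraints on $b$, and then propagating \eqref{eq:c1} to \eqref{eq:c2} by transitivity without the ``$g$ fixes nothing infinite'' hypothesis that Lemma~\ref{l:free} used. Since Lemma~\ref{L:am} is stated as already proved, the corollary follows immediately; I would simply state: ``By Lemma~\ref{L:am} choose $h$ with $[g,h]$ moving maximally, apply Corollary~\ref{C:main} to $[g,h]$, and expand each conjugate of $[g,h]=g\inv g^h$ into two conjugates of $g^{\pm1}$, giving a product of sixteen such, hence of sixteen conjugates of $g$.''
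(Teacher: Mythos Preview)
Your approach is exactly the one the paper intends: the paper gives no explicit proof beyond ``This now implies'', and the intended argument is precisely to apply Lemma~\ref{L:am} to obtain $h$ with $[g,h]$ moving maximally, then invoke Corollary~\ref{C:main} for eight conjugates of $[g,h]$, and finally expand each conjugate $[g,h]^k=(g^k)^{-1}g^{hk}$ into two factors.

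One honest remark on your hedging about ``conjugates of $g$'' versus ``conjugates of $g$ and $g^{-1}$'': your instinct is right and your attempted fix (passing to $f^{-1}$) does not actually eliminate the $g^{-1}$ factors, since the expansion of $[g,h]^k$ always contributes one conjugate of $g^{-1}$ and one of $g$. What the argument literally yields is sixteen conjugates of $g$ and $g^{-1}$, exactly as stated in Corollary~\ref{c:free}; the wording of Corollary~\ref{C:16} is slightly looser in this respect. You were right to flag it and right not to overclaim a clean reduction to conjugates of $g$ alone.
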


\noindent
{\bf Acknowledgement:} We thank Greg Cherlin and the referee for thoughtful
comments.



\vspace{5cm}

\begin{small}
\noindent\parbox[t]{15em}{
Katrin Tent,\\
Mathematisches Institut,\\
Universit\"at M\"unster,\\
Einsteinstrasse 62,\\
D-48149 M\"unster,\\
Germany,\\
{\tt tent@math.uni-muenster.de}}
\hfill\parbox[t]{18em}{
Martin Ziegler,\\
Mathematisches Institut,\\
Albert-Ludwigs-Universit\"at Freiburg,\\
Eckerstr. 1,\\
D-79104 Freiburg,\\
Germany,\\
{\tt ziegler@uni-freiburg.de}}

\end{small}
\end{document}